\documentclass{article}[11pt]
\usepackage{amsmath}
\usepackage{amssymb}
\usepackage{amsthm}
\hfuzz 2pt
\addtolength{\hoffset}{ 0 cm}
\addtolength{\textwidth}{ 0 cm}

\numberwithin{equation}{section}

\newcommand{\ud}{\,\mathrm{d}}

\newcommand{\eps}{\varepsilon}
\newcommand{\R}{{\mathbb{R}}}
\theoremstyle{plain}
\newtheorem{Prop} {Proposition} 
\newtheorem{lemma} {Lemma}
\newtheorem{theorem}{Theorem}

\theoremstyle{remark}

\author{ Pierre-Emmanuel Jabin,\footnote{CSCAMM and Dept. of Mathematics, University of Maryland,
College Park, MD 20742, USA. P.E. Jabin is partially supported by NSF Grant 1312142 and by NSF Grant RNMS (Ki-Net) 1107444.}
\quad Zhenfu Wang
\footnote{CSCAMM and Dept. of Mathematics, University of Maryland,
College Park, MD 20742, USA. Z. Wang is supported by NSF Grant 1312142.}
}
\title{Mean Field Limit and Propagation of Chaos for Vlasov Systems with Bounded Forces}
\date{}

\begin{document}
 \maketitle
\begin{abstract}
We consider large systems of particles interacting through rough but bounded interaction kernels. We are able to control the relative entropy between the $N$-particle distribution and the expected limit which solves the corresponding Vlasov system. This implies the Mean Field limit to the Vlasov system together with Propagation of Chaos through the strong convergence of all the marginals. The method works at the level of the Liouville equation and relies on precise combinatorics results. 
\end{abstract}
\tableofcontents
\section{Introduction} 
 Consider the classical Newton dynamics for $N$ indistinguishable point-particles. Denote by $X_i \in \Omega$ and $V_i \in \mathbb{R}^d$ the position and velocity of particle number $i$. The space domain $\Omega$ may be the whole space $\mathbb{R}^d$ or the periodic torus $\mathbb{T}^d$.  The evolution of the system is given by the following ODEs, (in a precise and weak sense defined below in subsection \ref{weaksolution})
\begin{equation}
\label{ODE}
\begin{cases}
\dot{X}_i =V_i,\\
\dot{V}_i= \frac{1}{N}\sum_{j \ne i} K(X_i- X_j),
\end{cases}
\end{equation}
where $i =1, \cdots, N$. We use the so-called 
mean-field scaling which consists in keeping the total
mass (or charge) of order $1$: this explains the $1/N$ factor in front
of the force terms. 

Our method applies in an identical manner to stochastic models, hence we will consider in general the system of stochastic differential equations
\begin{equation}
\label{SDE}
\begin{cases}
\ud  X_i=V_i  \ud t,\\
\ud {V}_i= \frac{1}{N}\sum_{j \ne i} K(X_i- X_j) \ud t+ \sqrt{2\eps_N}  \ud W_i^t,
\end{cases}
\end{equation} 
where the $W_i^t$ are $N$ independent Wiener processes (Brownian motions), which may model various type of random phenomena: For instance random collisions against a given background. The stochastic part is scaled with the parameter $\eps_N$. Our approach is completely independent of the choice of $\eps_N$ so we will handle at the same time 
\begin{itemize}
\item No randomness $\eps_N=0$ where \eqref{SDE} reduces to the deterministic \eqref{ODE}.
\item Fixed randomness $\eps_N\rightarrow \eps>0$ as $N\rightarrow +\infty$. 
\item Vanishing randomness $\eps_N\rightarrow \eps=0$ as $N\rightarrow +\infty$.
\end{itemize}
The best known example of interaction kernel is the Coulombian or gravitational force, $K(x)=C\,x/|x|^d$. In this article, we however consider {\em bounded interaction kernels with no additional regularity}, meaning that we only assume that $K\in L^\infty$. While this does not cover the Coulombian case, it significantly expands the interaction kernels for which one can prove the mean field limit and propagation of chaos, including for very oscillatory kernels.

As $N\rightarrow\infty$, one expects that the system of particles will converge to a continuous PDE model, the Vlasov or McKean-Vlasov (with diffusion) equation 
\begin{equation}
\label{PDE}
\partial_t f + v \cdot \nabla_x f + K \star \rho \cdot \nabla_v f-\eps\,\Delta_v f=0,
\end{equation}
where $f=f(t,x, v)$ is the phase space density while $\rho(t, x) =\int_{\mathbb{R}^d} f(t, x, v) \ud v$ is the macroscopic density. 

The main goal of this article is to derive the Vlasov Eq. \eqref{PDE} from the systems \eqref{ODE} or \eqref{SDE} and quantify the convergence. We give the precise notions of convergence (together with the definitions of Mean Field limit and propagation of chaos) in subsection \ref{mainresults}.

{\bf Notations}: We denote $X=(x_1, \cdots, x_N)$ and $V=(v_1, \cdots, v_N)$ while keeping $x\in \Omega$ and $v\in\R^d$ for the variables at the limit. We also use $z_i=(x_i, v_i)$, $z=(x,\;v)$ and $Z=(z_1, \cdots, z_N)$.
\subsection{Which solution for the ODE system: The Liouville Equation\label{weaksolution}} 
Even before considering the limit $N\rightarrow \infty$, the first nontrivial question is which notion of solution one can use for the system of ODEs \eqref{ODE} (and to a lesser degree for \eqref{SDE}). Indeed as $K$ is only bounded, we are quite far from the classical Cauchy-Lipschitz theory, requiring $K$ locally Lipschitz.

As usual for this type of question, we consider instead of \eqref{ODE}, the Liouville equation
\begin{equation}
\label{Liouville}
\partial_t f_N + \sum_{i=1}^N \Big( v_i \cdot \nabla_{x_i}f_N +\frac{1}{N} \sum_{j \ne i} K(x_i-x_j) \cdot \nabla_{v_i} f_N \Big)=\eps_N\sum_{i=1}^N \Delta_{v_i} f_N.
\end{equation}
Defining the Liouville operator as 
\begin{equation*}
L_N =\sum_{i=1}^Nv_i \cdot \nabla_{x_i} + \frac{1}{N}\sum_{i=1}^N \sum_{j \ne i} K(x_i-x_j) \cdot \nabla_{v_i}-\eps_N\sum_{i=1}^N \Delta_{v_i},
\end{equation*}
the Liouville equation can be written as 
\begin{equation*}
\partial_t f_N + L_N f_N=0.
\end{equation*}
One advantage of our approach is that we only need weak solutions to \eqref{Liouville}, {\em i.e.} solutions in the sense of distribution as per
\begin{Prop} {\bf Existence of weak solution of Liouville equation (\ref{Liouville}).}\\ 
\label{WeaSolLio}
Assume that $K\in L^\infty$ and that the initial data $f_N^0\geq 0$ satisfies the following assumptions
\begin{equation}
\label{AssLiouWeak1}\begin{split}
&i) \quad f_N^0\in L^1((\Omega\times \R^d)^N)\ \mbox{with}\ \int_{(\Omega\times \R^d)^N} f_N^0 \ud Z=1,\\
& ii) \quad \int_{(\Omega \times \mathbb{R}^d)^N} f_N^0 \log f_N^0 \ud Z < \infty,
\end{split}
 \end{equation}
together with the moment assumptions
\begin{equation}
\label{AssLiouWeak2}
iii) \quad  \int_{(\Omega \times \mathbb{R}^d)^N} \sum_{i=1}^N \left( 1+ |x_i|^{2k} + |v_i|^{2k}\right) f_N^0 \ud Z < \infty,
\end{equation}
for some $k>0$. 
Then there exists $f_N\geq 0$ in $L^\infty(\R_+,\;L^1(\Omega\times \R^d)^N)$ solution to \eqref{Liouville} in the sense of distribution and satisfying
\begin{equation}
\label{propLiouWeak}\begin{split}
&i)\quad \int_{(\Omega\times \R^d)^N} f_N(t,Z)\ud Z=1,\quad \mbox{for}\ a.e.\ t,\\
&ii)\quad \int_{(\Omega \times \mathbb{R}^d)^N} f_N(t,Z)\, \log f_N (t, Z)\ud Z +\eps_N \int_0^t \int_{(\Omega \times \mathbb{R}^d)^N} \frac{|\nabla_V f_N(s,Z)|^2}{f_N(s,Z )} \ud Z \ud s \\
&\qquad\qquad\qquad\leq \int_{(\Omega \times \mathbb{R}^d)^N} f_N^0 \log f_N^0 \ud Z, \quad \mbox{for}\ a.e.\ t,\\
&iii) \sup_{t\in [0,\ T]} \int_{(\Omega \times \mathbb{R}^d)^N} \sum_{i=1}^N \left( 1+ |x_i|^{2k} + |v_i|^{2k}\right)   f_N(t, Z) \ud Z < \infty, \ \mbox{for any} \ T<\infty.
\end{split}
\end{equation}
\end{Prop}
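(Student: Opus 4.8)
The plan is to construct the solution by a double regularization and then pass to the limit, relying crucially on the fact that the transport part of the Liouville operator is divergence free in phase space: indeed $\nabla_{x_i}\cdot v_i = 0$ and $\nabla_{v_i}\cdot\big(\frac1N\sum_{j\ne i}K(x_i-x_j)\big)=0$ because the force depends only on positions. This divergence-free structure is what makes the $L^1$ mass and the entropy behave well under the flow, and it is the backbone of the whole argument.

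First I would regularize the data and the coefficients. I mollify the kernel into $K^\eta=K\star\chi_\eta$, which stays uniformly bounded, $\|K^\eta\|_\infty\le\|K\|_\infty$, mollify and truncate the initial datum into smooth, strictly positive, rapidly decaying $f_N^{0,\eta}$ still satisfying (i)--(iii) with constants uniform in $\eta$, and—so as to treat $\eps_N=0$ and $\eps_N>0$ on the same footing—add an artificial viscosity $\delta(\Delta_X+\Delta_V)$. For $\delta>0$ and smooth coefficients the equation is uniformly parabolic, so classical parabolic theory yields a unique smooth $f_N^{\eta,\delta}\ge 0$, the nonnegativity coming from the maximum principle.

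Next I would derive the three estimates, uniformly in $\eta,\delta$. Testing against $1$ and using the divergence-free structure together with the fact that the Laplacians integrate to $0$ gives conservation of mass, (i). Testing against $1+\log f_N^{\eta,\delta}$, the transport term drops out by the divergence-free property while the diffusion terms produce the nonnegative dissipation, so that $\frac{d}{dt}\int f\log f = -\eps_N\int\frac{|\nabla_V f|^2}{f}-\delta\int\frac{|\nabla_{X,V}f|^2}{f}\le 0$, which integrates to (ii) (the $\delta$ term only strengthens the inequality). For the moments I would compute $\frac{d}{dt}\int\sum_i(1+|x_i|^{2k}+|v_i|^{2k})\,f$: transport of $|x_i|^{2k}$ produces $v_i$-terms, transport of $|v_i|^{2k}$ produces force terms bounded by $\|K\|_\infty$, and the Laplacians lower the order, so a Gr\"onwall argument—using boundedness of $K$ in an essential way—closes (iii) on every $[0,T]$ (on the torus the $|x_i|$ moment is automatic). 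A key auxiliary point is that the moment bound controls the negative part of the entropy by comparison with a Gaussian, so $\int f\log f$ is genuinely finite and (ii) is meaningful.

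Finally I would pass to the limit $\delta\to0$ and then $\eta\to0$. The entropy and moment bounds give, through Dunford--Pettis, equi-integrability and tightness, hence a subsequence $f_N^{\eta,\delta}\rightharpoonup f_N$ weakly in $L^1$ with no loss of mass. The linear terms pass by weak convergence; writing the force term in divergence form $\nabla_{v_i}\cdot\big(\frac1N\sum_{j\ne i}K^\eta(x_i-x_j)\,f\big)$ avoids needing any control on $\nabla f$, and a strong--weak (Vitali) argument handles $K^\eta f^\eta\to Kf$ tested against $\nabla_{v_i}\varphi$, using that $K^\eta\to K$ a.e. and boundedly while $f^\eta$ is equi-integrable. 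The viscosity term vanishes since $\sqrt\delta\,\nabla f/\sqrt f$ stays bounded. Properties (i)--(iii) survive in the limit: mass by tightness, the entropy inequality (ii) and the Fisher dissipation by weak lower semicontinuity of the convex entropy and Fisher functionals, and the moment bound (iii) by Fatou's lemma. I expect the main obstacle to be exactly this last step—keeping the entropy/Fisher dissipation and the moments controlled simultaneously so that no mass or entropy escapes to infinity, and justifying the passage in the force term with only $K\in L^\infty$; it is the uniform parabolic regularization together with the divergence-free structure that makes it go through.
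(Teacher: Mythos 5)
Your proposal is correct and follows exactly the route the paper indicates (the paper in fact omits the proof, remarking only that it is straightforward by approximating $K$ by a sequence of smooth kernels and then passing to the limit): regularize the kernel and data, derive the mass, entropy--dissipation and moment estimates uniformly, and pass to the weak $L^1$ limit via Dunford--Pettis and lower semicontinuity. The artificial viscosity, the divergence-form treatment of the force term, and the control of the negative part of the entropy by the moments are precisely the standard devices needed to carry out that sketch, so your write-up is a faithful completion of the paper's argument rather than a different one.
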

We omit the proof of Proposition \ref{WeaSolLio}. It is straightforward by approximating $K$ by a sequence of smooth kernels $K_\epsilon$ and then passing to limit.

It is important to emphasize here that we do not have uniqueness in Prop. \ref{WeaSolLio}: There could very well be several such solutions.  
Uniqueness and in general the well-posedness of the Cauchy problem for advection equations like \eqref{Liouville} are usually handled through the theory of renormalized solutions as introduced in \cite{DL} and improved in \cite{Am} (we refer to \cite{AmCr} and \cite{DeL} for a very good introduction to the theory). 

Renormalized solutions not only give well-posedness to advection equations like \eqref{Liouville} but also provide the existence of a flow to the corresponding ODE system thus giving a meaning to the ODE system \eqref{ODE}.

In the case $\eps_N=0$, the general setting of \cite{Am} would require $K \in BV$. That may sometimes be  improved for second order systems like \eqref{ODE}, see \cite{Bo}, \cite{BoCr}, \cite{CJ}, \cite{JaMa}. However for a system in large dimension like \eqref{ODE}, it seems out of reach to obtain renormalized solutions or a well posed flow with only $K\in L^\infty$. Therefore in that case, it is actually {\em critical} to be able to work with only weak solutions to \eqref{Liouville}.

If one had a full diffusion, that is $\Delta_x f_N+\Delta_v f_N$ in the Liouville Eq. \eqref{Liouville}, it would in general be possible to obtain uniqueness together with a flow for the system \eqref{SDE} in some sense , see for instance \cite{Chavanis}, \cite{Fig}, and \cite{LL2}. Note though that even for $\eps_N>0$, the diffusion in \eqref{Liouville} is degenerate (diffusion only in the $v_i$ variables) so that even for $\eps_N> 0$, well posedness for Eq. \eqref{Liouville} does not seem easy with only $K\in L^\infty$.

Of course our analysis also applies to more regular interactions $K$ for which it may be possible to have solutions to the ODE or SDE systems \eqref{ODE} or \eqref{SDE} even if only for short times (a typical example would be $K$ continuous).
%
\subsection{The Vlasov equation (\ref{PDE}). Weak-strong uniqueness}
This article is inspired by a classical weak-strong uniqueness argument for the Vlasov equation, based on the relative entropy of two solutions. Consider two non-negative solutions $f$ and $\tilde f$ with total mass $1$ to Eq. \eqref{PDE}. If $f$ is smooth enough then it is possible to control the distance between them through the relative entropy of $\tilde{f}$ with respect to $f$ or
\[
H(t):=H(\tilde{f}|f)(t) =\int_{\Omega \times \mathbb{R}^d} \tilde{f} \log (\frac{\tilde{f}}{f}) \ud x \ud v .
\]
More precisely, one has the following result
\begin{theorem}[Weak-strong Uniqueness] Assume that $K\in L^\infty$, that  $f(t,x,v)\in L^\infty([0,\ T],\ L^1(\Omega\times\R^d)\cap W^{1,p})$ for any $1 \leq p \leq \infty$ is a strong solution to \eqref{PDE} with 
\begin{equation}
\label{explambda}
\theta_f=\sup_{t\in[0,\ T]}  \int_{\Omega \times \mathbb{R}^d} e^{\lambda |\nabla_v \log f|} f \ud x \ud v < \infty, 
\end{equation}
for some $\lambda>0$. 
Then for any $\tilde f\in L^\infty([0,\ T],\ L^1(\Omega\times\R^d))$, weak solution to (\ref{PDE}) with mass $1$, initial value $\tilde f^0$ and satisfying 
\[
\int_{\Omega\times\R^d} \tilde f\,\log \tilde f \ud x \ud v + \eps \int_0^t\int_{\Omega\times\R^d} \frac{|\nabla_v \tilde f|^2}{\tilde f} \ud x \ud v \ud s\leq \int_{\Omega\times\R^d} \tilde f^0\,\log \tilde f^0 \ud x \ud v,
\]
one has for some constant $C>0$ and any $t\in [0,\ T]$ that as long as $H(\tilde f|\;f)(s)\leq 1$ then for any $s \in [0, t]$
\[
H(\tilde f|\;f)(t)\leq \exp\left(C\,t\,\|K\|_{L^\infty}\,\left(1+\log \theta_f\right)\right)\;H(\tilde f|\;f)(t=0)\,.
\]
In particular if initially $f(t=0)=\tilde f^0$ then $f=\tilde f$ at any later time.\label{weakstrongpde}
\end{theorem}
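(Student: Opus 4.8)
The plan is to differentiate the relative entropy $H(t)=\int\tilde f\log(\tilde f/f)\,\ud x\,\ud v$ in time, using that $f$ solves \eqref{PDE} strongly and $\tilde f$ solves it weakly. Writing $H=\int\tilde f\log\tilde f-\int\tilde f\log f$, the first piece is controlled directly by the entropy inequality assumed for $\tilde f$, contributing $-\eps\int_0^t\int|\nabla_v\tilde f|^2/\tilde f$. For the second piece I would compute $\frac{d}{dt}\int\tilde f\log f=\int\partial_t\tilde f\,\log f+\int\tilde f\,\partial_t f/f$ and substitute both equations. After integrating by parts, the free-transport terms $v\cdot\nabla_x$ cancel exactly (the advecting field is divergence free), the two diffusion contributions combine with the $\tilde f$-Fisher term into the non-positive relative Fisher information $-\eps\int\tilde f|\nabla_v\log(\tilde f/f)|^2$, and one is left with the single coupling term
\[
-\int \tilde f\,\big(K\star(\tilde\rho-\rho)\big)\cdot\nabla_v\log f\,\ud x\,\ud v .
\]
Dropping the favorable Fisher term then gives $H(t)\le H(0)-\int_0^t(\mathrm{coupling})\,\ud s$, which is legitimate for the merely weak $\tilde f$ since only one $v$-derivative falls on the smooth $f$.

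The decisive algebraic observation is that this coupling term is genuinely quadratic in the difference $\tilde f-f$. Since $K\star(\tilde\rho-\rho)$ is independent of $v$ and $\int\nabla_v f\,\ud v=0$, one has $\int f\,(K\star(\tilde\rho-\rho))\cdot\nabla_v\log f=\int (K\star(\tilde\rho-\rho))\cdot\nabla_v f=0$, so I may subtract $f$ and rewrite the coupling term as $-\int (\tilde f-f)\,(K\star(\tilde\rho-\rho))\cdot\nabla_v\log f$. Both factors now vanish as $\tilde f\to f$: the force is estimated by $\|K\star(\tilde\rho-\rho)\|_{L^\infty}\le \|K\|_{L^\infty}\|\tilde\rho-\rho\|_{L^1}\le\|K\|_{L^\infty}\|\tilde f-f\|_{L^1}$, and by the Csisz\'ar--Kullback--Pinsker inequality $\|\tilde f-f\|_{L^1}\le\sqrt{2H}$. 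It remains to bound the weighted integral $\int|\tilde f-f|\,|\nabla_v\log f|$ by $C\sqrt H$, which would close the estimate into $\frac{d}{dt}H\le C\|K\|_{L^\infty}H$ and give the stated Gronwall bound.

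The main obstacle is exactly this weighted integral, because the weight $\nabla_v\log f$ is unbounded; this is where hypothesis \eqref{explambda} is essential. Were $\nabla_v\log f$ bounded by $M$, a second application of Csisz\'ar--Kullback--Pinsker would give $\int|\tilde f-f||\nabla_v\log f|\le M\sqrt{2H}$, hence $|\mathrm{coupling}|\le 2M\|K\|_{L^\infty}H$. To replace the missing $L^\infty$ bound by the integrable quantity $\log\theta_f$, I would split the weight at a threshold $M$: on $\{|\nabla_v\log f|\le M\}$ the crude bound gives $M\sqrt{2H}$, while the tail $\{|\nabla_v\log f|>M\}$ is handled against $f$ by Chebyshev applied to $e^{\lambda|\nabla_v\log f|}$, and against $\tilde f$ by the Donsker--Varadhan/Gibbs variational form of the relative entropy, $\int\tilde f\,\psi\le \frac1\lambda H+\frac1\lambda\log\int f e^{\lambda\psi}$, which here yields only $\frac1\lambda H$ plus an exponentially small tail of $f$. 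Optimizing $M\sim\frac1\lambda\log\theta_f$ produces a bound of order $\frac1\lambda(1+\log\theta_f)\sqrt H$; a logarithmic-in-$H$ factor also surfaces from the weak-solution tail, and keeping the final estimate \emph{linear} in $H$ (as uniqueness requires, rather than merely $\sqrt H$) is precisely where the sharp large-deviation/combinatorial estimates must be invoked, under the running assumption $H\le 1$. Feeding this back gives $\frac{d}{dt}H\le C\,\|K\|_{L^\infty}(1+\log\theta_f)\,H$, whence Gronwall's lemma yields the claimed exponential bound and, in particular, $f=\tilde f$ when $H(0)=0$. I expect the two delicate points to be justifying the time differentiation for the weak solution $\tilde f$ and taming the $\tilde f$-tail above, since $\tilde f$ carries no exponential moment of its own.
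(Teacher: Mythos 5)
Your overall architecture coincides with the paper's: the decomposition $H=\int\tilde f\log\tilde f-\int\tilde f\log f$, the use of the assumed entropy dissipation for the first piece, testing the weak equation for $\tilde f$ against $\log f$ (legitimate after smoothing $f$ by a density argument), the absorption of the two diffusion contributions and the Fisher term of $\tilde f$ into the non-positive relative Fisher information $-\eps\int\tilde f|\nabla_v\log(f/\tilde f)|^2$, the reduction of the coupling term to $-\int(\tilde f-f)\,\bigl(K\star(\tilde\rho-\rho)\bigr)\cdot\nabla_v\log f$ via $\int f R=0$, and the bound $\|K\star(\tilde\rho-\rho)\|_{L^\infty}\le\|K\|_{L^\infty}\|\tilde f-f\|_{L^1}\le\|K\|_{L^\infty}\sqrt{2H}$ are all exactly the steps of the appendix.

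The genuine gap is in the final step, the control of $\int|\tilde f-f|\,|\nabla_v\log f|$. Your truncation-plus-Donsker--Varadhan scheme does not deliver a bound of the form $C\lambda^{-1}(1+\log\theta_f)(\sqrt H+H)$: the tail of $f$ on $\{|\nabla_v\log f|>M\}$ is of size $e^{-\lambda M/2}\theta_f$, which is small in $M$ but not in $H$, so to make it $O(\sqrt H)$ you must take $M\sim\lambda^{-1}\log(\theta_f/\sqrt H)$, and the near-field term $M\,\|\tilde f-f\|_{L^1}$ then carries the extra factor $\log(1/H)$ that you yourself notice. This loss is not cosmetic: it degrades the Gronwall inequality to $\dot H\lesssim H\log(1/H)$, which still yields uniqueness by Osgood's lemma but not the stated quantitative bound $H(t)\le e^{Ct\|K\|_{L^\infty}(1+\log\theta_f)}H(0)$. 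Moreover, the remedy you invoke (``sharp large-deviation/combinatorial estimates'') is the machinery of Theorem \ref{Main Theorem} for the $N$-particle quantity $R_N$ and plays no role in this one-particle estimate. What the paper actually uses is the weighted Csisz\'ar--Kullback--Pinsker inequality of Bolley and Villani (Theorem 1 in \cite{BV}) with weight $\varphi=|\nabla_v\log f|$, namely
\[
\int|\nabla_v\log f|\,|\tilde f-f|\,\ud x\,\ud v\;\le\;\frac{2}{\lambda}\left(\frac32+\log\int e^{\lambda|\nabla_v\log f|}f\,\ud x\,\ud v\right)\left(\sqrt H+\frac12 H\right),
\]
which has no $\log(1/H)$ loss; combined with the $\sqrt{2H}$ from the force this gives $\dot H\lesssim\|K\|_{L^\infty}(1+\log\theta_f)(H+H^{3/2})$, and $H^{3/2}\le H$ under the running assumption $H\le1$ closes the argument. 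The missing ingredient is therefore precisely this sharper weighted CKP inequality (or an equivalent loss-free replacement for your splitting), not a large-deviation refinement.
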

The short proof of Theorem \ref{weakstrongpde} is given in the appendix and relies on a weighted Csisz\'ar-Kullback-Pinsker (see \cite{BV}).

Theorem \ref{weakstrongpde} requires enough smoothness on $f$. Fortunately such solutions are guaranteed to exist, at least on some bounded time interval per
\begin{Prop}
Assume that $K\in L^\infty$, $f^0\in L^1(\Omega\times \R^d)\cap W^{1,p}$ for every $1 \leq p \leq \infty$ and s.t. for some $\lambda_0>0$
\[
\int_{\Omega \times \mathbb{R}^d} e^{\lambda_0 |\nabla_{(x,v)} \log f^0|} f^0\, dx\,dv < \infty.
\]
Then there exists $T$ depending on $f^0$ and $f\in L^\infty([0,\ T],\ L^1(\Omega\times\R^d)\cap W^{1,p})$ solution to \eqref{PDE} s.t. \eqref{explambda} holds for some $\lambda>0$. Furthermore, if $\eps=0$ and we assume that 
\[
|\nabla_{(x, v)} \log f^0 | \leq  C (1+ |x|^{k}+ |v|^k)
\]
for some $k>0$, 
then 
\begin{equation*}
\sup_{t\in[0,\ T]}|\nabla_{(x, v)} \log f(t, x, v) | \leq C e^{CT} (1+ |x|^k + |v|^k).
\end{equation*}

 \label{strongexist}
\end{Prop}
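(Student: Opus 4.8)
The plan is to build the solution by regularizing the kernel and passing to the limit, arranging every estimate to depend on $K$ only through $\|K\|_{L^\infty}$ so that it survives the limit. Fix a smooth approximation $K_\delta$ with $\|K_\delta\|_{L^\infty}\le\|K\|_{L^\infty}$. For smooth $K_\delta$ the force $F=K_\delta\star\rho$ is Lipschitz in $x$, so classical theory (Cauchy--Lipschitz for the characteristics when $\eps=0$, standard parabolic theory when $\eps>0$) produces a smooth, strictly positive local solution $f_\delta$ of \eqref{PDE} with $K$ replaced by $K_\delta$, conserving mass. The entire difficulty is to produce a time $T>0$ \emph{independent of} $\delta$ on which $f_\delta$ stays in $W^{1,p}$ for all $p$ and keeps the exponential moment; compactness (an Aubin--Lions argument in $(x,v)$, with time regularity read off from the equation) then extracts a limit $f$ solving \eqref{PDE} and inheriting \eqref{explambda}.

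The estimates are driven by the equation for $g=\log f_\delta$. Dividing \eqref{PDE} by $f_\delta$ gives
\[
\partial_t g + v\cdot\nabla_x g + F\cdot\nabla_v g - \eps\,(\Delta_v g + |\nabla_v g|^2)=0.
\]
Differentiating and setting $p=\nabla_x g$, $q=\nabla_v g$, $w=(p,q)$, one finds that $w$ solves a transport--diffusion equation with operator $\partial_t + v\cdot\nabla_x + F\cdot\nabla_v - \eps\Delta_v - 2\eps\,\nabla_v g\cdot\nabla_v$ and with source terms $-(\nabla_x F)\,q$ and $-p$ in the equations for $p$ and $q$ respectively; denoting the material derivative along this operator by a dot, $\dot p=-(\nabla_x F)q$ and $\dot q=-p$, so the amplification of $|w|$ is governed by $\|\nabla_x F\|_{L^\infty}$. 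The closing observation is that this quantity is controlled by the moment itself: since $\nabla_x F=K_\delta\star\nabla_x\rho$ and $|\nabla_x g|\le|w|$, using $t\le\lambda^{-1}e^{\lambda t}$,
\[
\|\nabla_x F\|_{L^\infty}\le \|K\|_{L^\infty}\,\|\nabla_x\rho\|_{L^1}\le \|K\|_{L^\infty}\int f_\delta\,|\nabla_x g|\,\ud x\,\ud v\le \frac{\|K\|_{L^\infty}}{\lambda}\int e^{\lambda|w|}f_\delta\,\ud x\,\ud v.
\]

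I would then run the exponential-moment estimate on $\theta(t)=\int e^{\lambda(t)|w|}f_\delta\,\ud x\,\ud v$. When $\eps=0$ this is transparent: the flow $\dot X=V,\ \dot V=F(t,X)$ is measure preserving, $f_\delta$ is constant along it, and $|w|$ grows at rate $C(1+\|\nabla_x F\|_{L^\infty})$, so choosing $\lambda(t)=\lambda_0\exp(-C\int_0^t(1+\|\nabla_x F\|_{L^\infty}))$ gives $\theta(t)\le\theta(0)$. For $\eps>0$ I would instead differentiate $\theta$ in time, integrate the degenerate diffusion by parts, and check that the dissipation it produces absorbs the unfavorable contributions of the $|\nabla_v g|^2$ and $2\eps\,\nabla_v g\cdot\nabla_v$ terms. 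In both cases the bound $\|\nabla_x F\|_{L^\infty}\le\|K\|_{L^\infty}\theta/\lambda$ turns $(\theta,\lambda)$ into a closed coupled differential inequality, and a continuity/bootstrap argument yields a $\delta$-independent $T>0$ on which $\lambda(t)$ stays bounded below and $\theta(t)$ bounded above; since $\|\nabla_x f_\delta\|_{L^1}\le\theta/\lambda$ already controls the driving coefficient, the same transport structure propagates $\|f_\delta\|_{W^{1,p}}$ by a parallel $L^p$ energy estimate, mass conservation and the moment hypotheses handling the $L^1$ part. The conclusion \eqref{explambda}, which only asks for the weight $e^{\lambda|\nabla_v\log f|}$, then follows a fortiori from the full-gradient moment.

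For the pointwise bound when $\eps=0$ I use the characteristic ODE for $w$ directly: $|w(t,Z(t))|\le|w^0(Z(0))|\exp(\int_0^t C(1+\|\nabla_x F\|_{L^\infty}))\le Ce^{CT}|w^0(Z(0))|$ on $[0,T]$. Because $|\dot X|=|V|$ and $|\dot V|\le\|K\|_{L^\infty}$, the backward flow displaces a point by at most $O(1+|z|)$ over $[0,T]$, so the hypothesis $|w^0|\le C(1+|x|^k+|v|^k)$ transfers to $|\nabla_{(x,v)}\log f(t,z)|\le Ce^{CT}(1+|x|^k+|v|^k)$. I expect the genuine obstacle to be the diffusive moment estimate ($\eps>0$): controlling the signs of the diffusion-generated terms against the non-smooth weight $e^{\lambda|w|}$ and verifying that the dissipation truly dominates, together with making the bootstrap produce a lifespan $T$ uniform in the regularization parameter $\delta$.
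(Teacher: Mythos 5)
Your proposal follows essentially the same route as the paper's appendix proof: differentiating the equation to get the system for $\nabla_x f,\ \nabla_v f$ (equivalently for $w=\nabla\log f$, with the growth rate governed by $\|K\star\nabla_x\rho\|_{L^\infty}+1$), an exponential moment $\Theta(t,\lambda)=\int f e^{\lambda|\nabla\log f|}$ propagated with a shrinking weight $\lambda(t)=\lambda_0e^{-Ct}$ (the paper phrases this as the transport inequality $\partial_t\Theta-C\lambda\partial_\lambda\Theta\le 0$ solved along characteristics in $(t,\lambda)$), a sign check on the diffusive contribution, and back-tracing characteristics for the polynomial pointwise bound when $\eps=0$. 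The only organizational difference is that the paper first closes a Riccati-type inequality for $\|\nabla f\|_{L^1}$ alone to fix the lifespan $T$ and the constant $C$ before running the moment estimate, whereas you couple everything through $\|\nabla_x F\|_{L^\infty}\le\|K\|_{L^\infty}\,\theta/\lambda$ and bootstrap --- both close; and the one step you flag as the genuine obstacle (the sign of the diffusion-generated terms against the weight $e^{\lambda|w|}$ for $\eps>0$) is carried out explicitly in the paper, where the remainder $Q_\eps$ is shown to be $\le 0$ after two integrations by parts, so your plan does go through.
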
 
\noindent The proof of Prop. \ref{strongexist} is straightforward and also given in the appendix.

It is tempting to try to use directly a result like Theorem \ref{weakstrongpde} to prove the Mean Field limit. In the case of the purely deterministic system \eqref{ODE}, one may associate to each solution the so-called empirical measure $\mu_N$ which is a probability measure on $\Omega\times\R^d$
\begin{equation}
\mu_N(t,x,v)=\frac{1}{N}\sum_{i=1}^N \delta(x-X_i(t))\,\delta(v-V_i(t)).\label{empirical}
\end{equation}
If $(X_i,V_i)_{i=1\dots N}$ solves \eqref{ODE} in an appropriate sense (for instance it comes from a flow), then $\mu_N$ defined through \eqref{empirical} is a solution to Eq. \eqref{PDE} in the sense of distribution. If one could then use a weak-strong uniqueness principle to compare $\mu_N$ to the expected smooth limit $f$ then the Mean Field limit and propagation of chaos would follow. 

This general idea plays an important role in the recent \cite{LazPic} for instance (see also \cite{BoPi,Laz}), leading to an improved truncation parameter (see the discussion after the main result). However Theorem \ref{weakstrongpde} relies on a very different weak-strong uniqueness principle than the one used in \cite{LazPic} and cannot be used directly as it is. There are several reasons for that: In particular Theorem \ref{weakstrongpde} requires the weak solution $\tilde f$ to have a bounded entropy, which cannot be the case of the empirical measure $\mu_N$.

Instead the main result in this article consists in extending Theorem \ref{weakstrongpde} to the Liouville Eq. \eqref{Liouville}.

The study of well-posedness for Vlasov-type systems is now classical and mostly focused on the Vlasov-Poisson case ($K=C\,x/|x|^d$). The existence of weak solutions was obtained in \cite{Arse75} but global existence of strong solutions in dimension $3$ had long been difficult (see \cite{BarDeg} for small initial data) before being obtained in \cite{Scha91}-\cite{Pfaff} and concurrently in \cite{LioPer91} through the propagation of moments (see also \cite{Pallard} for more recent estimates). The most general uniqueness result for the Vlasov-Poisson system was obtained in \cite{Loep06}. 
\subsection{Main result\label{mainresults}}
First define the tensor product of the expected limit $f$ by
\begin{equation*}
\bar{f}_N(t, X, V) = \Pi_{i=1}^N f(t, x_i, v_i),
\end{equation*}
We now compare $f_N$ to $\bar f_N$ through the $N$ dimensional relative entropy  
\begin{equation*}
H_N(f_N \vert \bar{f}_N)(t) = \frac{1}{N} \int_{\Omega^N \times (\mathbb{R}^d)^N}f_N \log(\frac{f_N}{\bar{f}_N}) \ud Z. 
\end{equation*}
We will also write $H_N(t):=H_N(f_N \vert \bar{f}_N)(t)$ in short. 
\begin{theorem}[Propagation of Chaos] 
Assume $K\in L^\infty$  and that $f(t,x,v)\in L^\infty([0,\ T],\ L^1(\Omega\times\R^d)\cap W^{1,p})$ for every $1 \leq p \leq \infty$ solves the Vlasov Eq. \eqref{PDE} with (\ref{explambda}) for some $\lambda>0$. For the case of vanishing randomness, that is in the case $\eps_N \to \eps =0$, we further assume that 
\[
\sup_{t\in[0,\ T]}|\nabla_{(x, v)} \log f(t, x, v) | \leq C (1+ |x|^k + |v|^k).
\] 
Assume that the initial data  $f_N^0$ of the Liouville equation (\ref{Liouville}) satisfies assumptions (\ref{AssLiouWeak1}) and (\ref{AssLiouWeak2}) with $k=1$ and 
\begin{equation*}
H_N(f_N^0\vert \bar{f}_N^0)=\frac{1}{N} \int_{(\Omega \times \mathbb{R}^d)^N}f_N^0 \log(\frac{f_N^0}{\bar{f}_N^0}) \ud Z \to 0, \quad \text{as \ } N \to \infty.
\end{equation*} 
In the case $\eps_N \to \eps =0$, we also assume that 
\[
\sup_{N} \frac{1}{N} \int_{(\Omega \times \mathbb{R}^d)^N} \sum_{i=1}^N \left( 1+ |x_i|^{2k} + |v_i|^{2k}\right) f_N^0 \ud Z < \infty.
\]
For any corresponding weak solution $f_N$ to the Liouville Eq. \eqref{Liouville} as given by Prop. \ref{WeaSolLio} then 
\[
\sup_{t\in [0,\ T]} H_N(f_N\vert \bar{f}_N)(t)\longrightarrow 0, \quad \text{as \ } N \to \infty,
\]
and for any fixed $k$, the $k-$marginal $f_{N, k}$ of $f_N$ converges to the $k-$tensor product of $f$ in $L^1$ as $N  \to \infty$, i.e.
\begin{equation}
\|f_{N,k}-f^{\otimes k}\|_{L^1} \to 0, \quad \text{as\ } N \to \infty. 
\end{equation}\label{propchaos}
\end{theorem}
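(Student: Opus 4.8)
The plan is to reproduce the weak–strong uniqueness argument of Theorem \ref{weakstrongpde}, but now at the level of the Liouville equation: control $H_N(t)$ by a Gronwall estimate and then pass to the marginals via Csisz\'ar--Kullback--Pinsker. First I would write down the equation satisfied by the reference $\bar f_N=\Pi_i f(t,z_i)$. Since $f$ solves \eqref{PDE}, $\bar f_N$ solves a Liouville-type equation with mean-field force $\bar F_{N,i}=(K\star\rho)(x_i)$ and diffusion $\eps$, whereas $f_N$ solves \eqref{Liouville} with empirical force $F_{N,i}=\frac1N\sum_{j\ne i}K(x_i-x_j)$ and diffusion $\eps_N$. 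Differentiating $H_N$ and integrating by parts, the free-transport terms drop out (as $\udiv_{x_i}v_i=0$), and I obtain an identity of the form
\[
\frac{\ud}{\ud t}H_N = -\frac{\eps_N}{N}\int f_N\Big|\nabla_V\log\frac{f_N}{\bar f_N}\Big|^2\ud Z + A_N + R_N,
\]
where $A_N=\frac1N\int f_N\sum_i (F_{N,i}-\bar F_{N,i})\cdot\nabla_{v_i}\log\frac{f_N}{\bar f_N}\ud Z$ is the cross term and $R_N$ collects the corrections coming from the diffusion mismatch $\eps_N\ne\eps$. These corrections carry a factor $|\eps_N-\eps|\to 0$ and involve $\int f_N|\nabla_V\log\bar f_N|^2$-type quantities; it is precisely to control them in the vanishing-randomness case that the extra growth hypothesis on $\nabla\log f$ and the uniform moment bound are imposed.

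The key observation is that both forces depend only on positions, so $\udiv_{v_i}(F_{N,i}-\bar F_{N,i})=0$. Writing $f_N\nabla_{v_i}\log\frac{f_N}{\bar f_N}=\nabla_{v_i}f_N-f_N\nabla_{v_i}\log f(z_i)$ and integrating the first piece by parts in $v_i$ makes it vanish, so that
\[
A_N=-\frac1N\int f_N\sum_i (F_{N,i}-\bar F_{N,i})\cdot\nabla_{v_i}\log f(z_i)\ud Z .
\]
This eliminates the dangerous factor $\nabla_V\log\frac{f_N}{\bar f_N}$ altogether, which is what makes the purely deterministic case $\eps_N=0$ tractable without any dissipation. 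I would then introduce the centered kernel $\psi(x,y)=K(x-y)-(K\star\rho)(x)$, so that $F_{N,i}-\bar F_{N,i}=\frac1N\sum_{j\ne i}\psi(x_i,x_j)-\frac1N(K\star\rho)(x_i)$ (the last term a harmless $O(1/N)$ remainder), and set $\chi(z_i,z_j)=\psi(x_i,x_j)\cdot\nabla_v\log f(z_i)$. Using $\int K(x-y)\rho(y)\ud y=(K\star\rho)(x)$ and $\int\nabla_vf\,\ud v=0$, one checks that $\chi$ is \emph{doubly} centered, $\int\chi(z,z')f(z')\ud z'=\int\chi(z,z')f(z)\ud z=0$; this cancellation is the source of the gain.

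To bound $A_N=-\frac1N\int f_N\,G\,\ud Z$ with $G=\frac1N\sum_{i\ne j}\chi(z_i,z_j)+O(1/N)$, I would invoke the Gibbs (Donsker--Varadhan) inequality $\int f_N\,\eta G\le N H_N+\log\int\bar f_N e^{\eta G}\ud Z$, valid for any $\eta>0$, to get
\[
A_N\le \frac{1}{\eta}H_N+\frac{1}{\eta N}\log\int\bar f_N\,\exp\Big(\frac{\eta}{N}\sum_{i\ne j}\chi(z_i,z_j)\Big)\ud Z + o_N(1).
\]
The crux of the whole argument, and the step I expect to be by far the hardest, is to show that this exponential moment of a doubly-centered $U$-statistic against the product measure $\bar f_N$ stays bounded uniformly in $N$ for $\eta$ small enough. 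This is where the precise combinatorics enters: expanding the exponential, the contribution of each monomial is organized by the partition structure of its index graph, the double centering of $\chi$ annihilates every term in which some index appears exactly once, and the surviving terms must be counted and resummed. The exponential integrability \eqref{explambda} (finiteness of $\theta_f$), through the bound $|\chi|\le 2\|K\|_{L^\infty}|\nabla_v\log f|$, is exactly what keeps each $\int f\,e^{\alpha|\chi|}$-type factor finite; optimizing $\eta$ against $\|K\|_{L^\infty}$ and $\theta_f$ produces the coefficient $\|K\|_{L^\infty}(1+\log\theta_f)$ familiar from Theorem \ref{weakstrongpde}.

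With such an estimate one reaches $\frac{\ud}{\ud t}H_N\le C\,H_N+o_N(1)$ with $C=C(\|K\|_{L^\infty},\theta_f)$, so that, since $H_N(0)\to0$, Gronwall's lemma gives $\sup_{[0,T]}H_N(t)\to0$. Finally, the convergence of marginals follows from the sub-additivity of the relative entropy with respect to marginals (using the exchangeability of $f_N$), $H(f_{N,k}\,|\,f^{\otimes k})\le k\,H_N$, combined with the Csisz\'ar--Kullback--Pinsker inequality, which yields $\|f_{N,k}-f^{\otimes k}\|_{L^1}^2\le 2\,H(f_{N,k}\,|\,f^{\otimes k})\le 2k\,H_N\to0$.
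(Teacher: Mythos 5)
Your proposal follows essentially the same route as the paper: the same entropy inequality for $H_N$ with the error term $-\frac{1}{N}\int f_N R_N$ obtained by integrating by parts in the $v_i$ variables, the same treatment of the diffusion mismatch in the three regimes of $\eps_N$, a convex-duality (Fenchel/Donsker--Varadhan) step reducing everything to the uniform exponential moment $\int\bar f_N e^{\nu|R_N|}\ud Z$ --- which is exactly the paper's Theorem \ref{Main Theorem}, established by the combinatorial expansion you sketch --- followed by Gronwall and the subadditivity-plus-Csisz\'ar--Kullback--Pinsker argument for the marginals. The only cosmetic caveat is that, since $f_N$ is merely a weak solution satisfying an entropy \emph{inequality}, the identity $\frac{\ud}{\ud t}H_N=\cdots$ should be recast in integrated inequality form by testing the Liouville equation against $\log\bar f_N$, as the paper does.
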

We recall that the marginals are defined by
\[
f_{N,k}=\int_{(\Omega\times\R^d)^{N-k}} f_N(t,Z) \ud z_{k+1}\dots \ud z_{N}.
\]
Theorem \ref{propchaos} has several consequences
\begin{itemize}
\item It implies a {\em classical Mean Field limit}. First note that the $1$-particle distribution $f_{N,1}$ converges to $f$. Assume that one can obtain solutions to the ODE \eqref{ODE} or SDE \eqref{SDE} system (at least for a short time independent of $N$) for almost all initial data. Consider now  a solution to \eqref{ODE} or \eqref{SDE} with random initial data determined according to the law $f_N^0$; the solution $(X_1(t),V_1(t),\ldots, X_N(t), V_N(t))$ is hence random as well (even the deterministic system \eqref{ODE} propagates any initial randomness). Then the empirical measure as defined by \eqref{empirical} satisfies that
\[
\mathbb{E}\,\mu_N(t,x,v)=f_{N,1}(t,x,v).
\]
Theorem \ref{propchaos} implies that with probability $1$, $\mu_N$ will converge to $f$ for the $weak-*$ topology of measures. We refer to \cite{Gol, GolMouRic, Jab} for a more precise presentation of this connection between the various concepts of Mean Field limit.
\item Theorem \ref{propchaos} is a strong form of propagation of chaos. The usual definition of propagation of chaos typically only require the weak convergence of the marginals, {\em i.e.} for fixed $k$ 
\[
f_{N,k} \longrightarrow f^{\otimes k}\quad\mbox{in}\ {\cal D}'.
\] 
Here we not only have strong convergence in $L^1$ for all marginals but also an explicit bound on the distance from the full law $f_N$. 

Such stronger notions of propagation of chaos have recently been more thoroughly investigated and some of the connections between them elucidated, we refer to \cite{HM, MM, MMW} for example or to the survey \cite{Jab}.
\item It is possible to be even more precise on the convergence of the marginals and in fact, one controls the relative entropy of each of them as
\begin{equation}
\begin{split}
H_k(f_{N,k}|\;f^{\otimes k})&=\frac{1}{k}\int_{(\Omega\times\R^d)^k} f_{N,k}\,\log \left(\frac{f_{N,k}}{f^{\otimes k}}\right)\,dz_1\dots dz_k\\ 
&\leq H_N(f_N|\;\bar f_N)(t)
\longrightarrow 0.
\end{split}\label{controlhk}
\end{equation}
The fact that the scaled entropy $H_N$ actually controls any other scaled entropy $H_k$ is {\em critical} for that and for the conclusion of Theorem \ref{propchaos}. We refer to the references above for the proof of this inequality.
\item Theorem \ref{propchaos} is quite demanding on the expected limit $f$, in particular through assumption \eqref{explambda}. This is in line with the assumption \eqref{explambda} of Theorem \ref{weakstrongpde} and with the general idea of weak-strong estimates: The weak requirements on $f_N^0$ and $K$ are replaced by strong assumptions on the limit. The assumption \eqref{explambda} is satisfied if $f$ has Gaussian or any kind of exponential decay: $f\sim e^{-\nu\,|v|^\alpha}$. In general $C^k$ functions with compact support cannot satisfy \eqref{explambda} though Gevrey-like regularity seems to be possible.
\item Theorem \ref{propchaos} is really a conditional result: It holds on any time interval $[0,\ T]$ for which one has existence of an appropriate solution $f$ to the Vlasov Eq. \eqref{PDE}. Prop. \ref{strongexist} guarantees that such a time interval will exist but $T$ could be larger than what is given by Prop. \ref{strongexist}. One may very well have $T=+\infty$ for some initial data or if additional regularity is known for $K$.
\item It would be relatively straightforward to extend Theorem \ref{propchaos} to 1st order systems of the kind
\begin{equation}
\ud X_i=\frac{1}{N}\sum_{j=1}^N K(X_i-X_j)\ud t +\eps_N \ud W_i^t,\label{1storderSDE}
\end{equation}
provided that appropriate assumptions are made on $K$, in particular that $\mbox{div}\,K=0$.
\item The estimates underlying Theorem \ref{propchaos} are in fact explicit, see subsection \ref{proofchaos}. This allows to handle interaction kernels $K_N$ depending on $N$ provided that $\sup_N \|K_N\|_{L^\infty}<\infty$. This is typical of numerical settings (particle' methods for instance) where $K$ is typically truncated or regularized. 
\end{itemize}

\bigskip

The first proofs of the Mean Field limit for deterministic systems such as \eqref{ODE} were performed in \cite{BraHep77, Dobr79, Neun79} (see also \cite{Spoh91}). Those now classical results have introduced the main concepts and questions for the Mean Field limit and propagation of chaos. They demand that $K\in W^{1,\infty}$ and rely on the corresponding Gronwall estimates for systems of ODEs (extended to infinite dimensional settings).

Obviously $K\in W^{1,\infty}$ is an important limitation which does not allow to treat many interesting kernels, either from the physics point of view or for numerical methods. In that last case it often makes sense to regularize or truncate $K$. Since in many settings, $K$ is only singular at the origin, $x=0$, this leads for instance to working with a smooth $K_N$ s.t. $K_N(x)=K(x)$ for $|x|\geq \eps_N$; $\eps_N$ being some determined scale which typically vanishes with $N$. The accuracy of the method depends on how small the scale $\eps_N$ can be taken; one critical scale is $\eps_N=N^{-1/d}$ which would be the minimal distance in physical space of $N$ particles over a grid.  

For Poisson kernels, $K=C\,x/|x|^d$, the Mean Field limit was obtained for particles initially on a regular mesh in \cite{Victory}, \cite{Wollman} for $\eps_N>>N^{-1/d}$. When the particles are not initially regularly distributed, propagation of chaos was obtained in \cite{Victory2} but only for $\eps_N\sim (\log N)^{-1}$. 
As mentioned above, those results were recently improved in \cite{LazPic} with much smaller truncation scales $\eps_N<<N^{-1/d}$.

The only results for deterministic second order systems with singular (non-Lipschitz) kernels without truncation are \cite{HauJab07} and the more recent \cite{HauJab14} for the propagation of chaos. Those require that $K$ satisfies for some $\alpha<1$
\[
|K(x)|\leq \frac{C}{|x|^\alpha},\quad |\nabla K(x)|\leq \frac{C}{|x|^{\alpha+1}}.
\] 
The result presented here does not require any regularity on $K$ (any bound on $|\nabla K|$) but does not allow $K$ to be unbounded either. It is therefore not directly comparable. In fact Theorem \ref{propchaos} is interesting precisely because it introduced a new and unexpected critical scale, $K\in L^\infty$.

\medskip

The derivation of the Mean Field limit and the propagation of chaos is more advanced for 1st order deterministic systems (System \eqref{1storderSDE} with $\eps_N=0$ for instance). Systems like \eqref{1storderSDE} with a kernel $K$ non smooth only at the origin $x=0$ enjoy additional symmetries with respect to second order which makes the derivation easier.  We refer to \cite{Jab} for a more thorough comparison. 

The main example of such 1st order system is the point vortex method for the 2D Euler equations. The Mean Field limit has been obtained for well distributed initial conditions, see for example \cite{CotGooHou, GooHouLow90, HouLowShe93} while the proof of propagation of chaos can be found in \cite{Scho95, Scho96}. We refer to \cite{Hau09} for the best results so far for general multi-dimensional 1st order systems.

\medskip

In comparison with the deterministic case, the stochastic case, $\eps_N>0$ in \eqref{SDE} or \eqref{1storderSDE}, seems harder as many of the techniques developed in the deterministic setting are not applicable. The Lipschitz case, $K\in W^{1,\infty}_{loc}$ can still be handled through Gronwall like inequalities, see for instance \cite{CarCanBol, CCH}. 

In the non degenerate case, $\eps_N\rightarrow \eps>0$ in \eqref{1storderSDE} for instance, then the regularizing properties of the stochastic part can actually be exploited to handle some singularity in $K$ (up to order $1/|x|$). For 1st order systems, propagation of chaos can hence be proved for the 2D viscous or stochastic vortex systems for the Euler equations, leading to the 2D incompressible Navier-Stokes system; see \cite{FlGuPr, FHM-JEMS, Osada}.
 
However the system considered here \eqref{SDE} has a degenerate stochastic part (there is no diffusion in the $x$ variable) which may in addition vanish at the limit if $\eps_N\rightarrow 0$. Theorem \ref{propchaos} is {\em the only result} that we are aware of in such a degenerate setting for non Lipschitz force terms. 
\medskip
\subsection{From combinatorics to Theorem \ref{propchaos} } \label{proofchaos}
 Define for any $p\geq 1$
\[
M_p :=\left( \int_{\Omega \times \mathbb{R}^d } |\nabla_v \log f|^p f\ud x \ud v \right)^{\frac{1}{p}}.
\] 
Theorem \ref{propchaos} is a straightforward consequence of 
\begin{theorem}
\label{Main Theorem}
Assume that $f\in L^\infty\cap L^1(\Omega\times\R^d)$ with $f\geq 0$ and $\int f=1$, that $\nabla_v f\in W^{1,p}_{loc}$ for every 
$1 \leq p \leq \infty$ with $\sup_{p<\infty} \frac{M_p}{p} < \infty $ and that $\|K\|_{L^\infty} \left( \sup_p \frac{M_p}{p} \right)  < \frac{1}{8e^2}$, then 
\begin{equation*}
\int_{(\Omega \times \mathbb{R}^d)^N} \bar{f}_N \exp(|R_N|) \ud Z   \leq 5 + 6\left( \frac{8e^2 \|K\|_{L^\infty} \left( \sup_p \frac{M_p}{p}\right)}{ 1- \left(8e^2 \|K\|_{L^\infty} \left( \sup_p \frac{M_p}{p}\right) \right)^2}\right)^2 < \infty,
\end{equation*}
where $\bar{f}_N=\Pi_{i=1}^N f(t,x_i,v_i)$ and $R_N$ is defined by
\begin{equation}
\label{RN}
R_N= \frac{1}{N}\sum_{i, j=1}^N \nabla_{v_i} \log f(x_i, v_i) \cdot \left\{ K(x_i-x_j)  -K\star \rho(x_i)\right\}. 
\end{equation}
\end{theorem}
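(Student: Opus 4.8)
The plan is to prove the exponential bound by a Taylor expansion of $\exp(R_N)$ combined with the combinatorial cancellation structure of $R_N$ under the product measure $\bar f_N$. Since $e^{|R_N|}\le e^{R_N}+e^{-R_N}$, it suffices to bound $\int\bar f_N\,e^{\pm R_N}\ud Z$, and because $R_N\mapsto -R_N$ amounts to replacing $K$ by $-K$ (which does not change $\|K\|_{L^\infty}$) the two cases are symmetric. Writing $\phi_{ij}:=\nabla_{v_i}\log f(z_i)\cdot\big(K(x_i-x_j)-K\star\rho(x_i)\big)$ so that $R_N=\frac1N\sum_{i,j=1}^N\phi_{ij}$, I would then work with the series $\int\bar f_N\,e^{R_N}\ud Z=\sum_{k\ge0}\frac{1}{k!\,N^k}\int\bar f_N\big(\sum_{i,j}\phi_{ij}\big)^k\ud Z$.

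The crucial ingredient is a double cancellation. Since $\rho$ is the $x$-marginal of $f$, one has $\int K(x_i-x_j)f(z_j)\ud z_j=K\star\rho(x_i)$, hence $\int\phi_{ij}\,f(z_j)\ud z_j=0$ for every fixed $z_i$ (tail cancellation); and since $K(x_i-x_j)-K\star\rho(x_i)$ is independent of $v_i$ while $\nabla_{v_i}\log f\,f=\nabla_{v_i}f$ integrates to zero in $v_i$, one also has $\int\phi_{ij}\,f(z_i)\ud z_i=0$ for every fixed $z_j$ (head cancellation). Expanding the $k$-th power into a sum over multi-indices $((i_1,j_1),\dots,(i_k,j_k))$ and using that $\bar f_N$ is a product, each term $\int\bar f_N\prod_\ell\phi_{i_\ell j_\ell}\ud Z$ factorizes over the connected components of the multigraph whose (directed) edges are the pairs $(i_\ell,j_\ell)$. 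By the two cancellations, any term possessing a vertex of total degree one integrates to zero; so only configurations in which every occurring index has degree at least two can survive, which is the source of all the gain.

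Next I would bound the surviving integrals. Using $|K\star\rho|\le\|K\|_{L^\infty}\|\rho\|_{L^1}=\|K\|_{L^\infty}$ gives the pointwise bound $|\phi_{ij}|\le 2\|K\|_{L^\infty}\,|\nabla_{v_i}\log f(z_i)|$; integrating each vertex $v$ against $f$ then produces a factor $\int|\nabla_v\log f|^{h_v}f\ud z_v= M_{h_v}^{h_v}$, where $h_v$ is the number of edges having $v$ as head (so $\sum_v h_v=k$), while the overall power of $2\|K\|_{L^\infty}$ is $k$. A connected component on $m$ vertices admits at most $N^m$ choices of distinct labels, which combines with the global $N^{-k}$ to give $N^{m-k}\le1$ since every surviving vertex has total degree $\ge2$, whence $m\le k$. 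Using $M_h^{h}\le (h\,\sup_p M_p/p)^h\le (e\,\sup_p M_p/p)^h\,h!$ to convert the moment bounds into factorials, I then have to arrange that the $1/k!$ from the exponential, the product of the $h_v!$, and the number of admissible labelled multigraphs combine into a convergent series; the extremal contribution comes from the $2$-regular components (all vertices of degree exactly two), which produce the squared quantity and the $1-(\,\cdot\,)^2$ denominator in the stated estimate. The smallness hypothesis $8e^2\|K\|_{L^\infty}(\sup_p M_p/p)<1$ is exactly what makes this series converge to the explicit right-hand side.

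The main obstacle is the combinatorial bookkeeping just sketched: one must count, sharply enough, the labelled multigraphs in which every vertex has total degree $\ge2$, keeping track of the directed head/tail structure and of the diagonal self-loops $i=j$ (for which the isolated term again vanishes by the head-cancellation $\int\phi_{ii}f(z_i)\ud z_i=0$, since $K(0)-K\star\rho(x_i)$ does not depend on $v_i$), and then match the combinatorial factors against the $k!$ and the Stirling factorials so that no net growth in $k$ survives. Extracting the precise constant $8e^2$ requires tracking that each head of degree $h$ costs at most $(e\,\sup_p M_p/p)^h\,h!$ while each edge costs $2\|K\|_{L^\infty}$, and that the number of ways to build and label the components grows no faster than geometrically; once these estimates are assembled, summing over the component structure yields the geometric-series bound in the statement.
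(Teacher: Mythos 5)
Your proposal is correct and follows essentially the same route as the paper: Taylor expansion of the exponential, the two cancellations (integration by parts in $v_i$ for heads and the convolution identity $\int K(x_i-x_j)\rho(x_j)\ud x_j=K\star\rho(x_i)$ for tails) reducing the expansion of $R_N^k$ to multi-indices in which every index occurs with multiplicity at least two, followed by a count of the surviving configurations combined with $M_h^h\le (e\sup_p M_p/p)^h\,h!$ and the gain $N^{m-k}\le 1$. The only differences are organizational (you use $e^{|R_N|}\le e^{R_N}+e^{-R_N}$ where the paper bounds odd moments by even ones, and your total-degree criterion is slightly coarser than the paper's head/tail-specific one, which is harmless for an upper bound), and the bookkeeping you defer does close — with the caveat that one must sum over multiplicity vectors so that the multinomial coefficient $k!/\prod_v h_v!$ cancels the $\prod_v h_v!$ coming from the moments, rather than bounding the latter crudely by $k!$, which would lose a factor $k^k$.
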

The proof of Theorem \ref{Main Theorem} is the main technical difficulty of the article and will be done in the next section. Instead we explain here how to simply prove Theorem \ref{propchaos} assuming Theorem \ref{Main Theorem}.

First of all we observe that the assumption $ \sup_p \frac{M_p}{p} < \infty $ is essentially  equivalent to the assumption \eqref{explambda} in Theorem \ref{propchaos}.
Indeed,  
\begin{itemize}
\item[i)] $\sup_p \frac{M_p}{p} <  \Lambda $ implies $\int f e^{ \lambda |\nabla_v \log f|}  \ud z $ is finite for any $\lambda < \frac{1}{e \Lambda}$: By Taylor expansion for $e^x$, 
\[
\begin{split}
\int f e^{ \lambda |\nabla_v \log f|} \ud z &\leq 1 + \sum_{p=1}^\infty \frac{1}{p!} \lambda^p \int f |\nabla_v \log f|^p \ud z \\
& \leq 1 + \sum_{p=1}^\infty \frac{1}{p!} \lambda^p  (\Lambda p)^p \leq 1+ \sum_{p=1}^\infty \left( e\Lambda\lambda\right)^p.
\end{split}
\]
 
\item[ii)] Assumption (\ref{explambda}) implies $\sup_p \frac{M_p}{p} \leq \Lambda$,  where $\Lambda$  depends on the integral value $\int f e^{ \lambda| \nabla_v \log f|}\ud z $.  Indeed, for any $p=1, 2, \cdots$, 
\[
\int f |\nabla_v \log f|^p \ud z \leq p! \lambda^{-p} \int f e^{\lambda |\nabla_v \log f|} \ud z.
\]
Since $p! \leq p^p$, 
\[
\sup_{p} \frac{M_p}{p} \leq  \frac{1}{\lambda} \sup_p (\int f e^{ \lambda |\nabla_v \log f|} \ud z)^{\frac{1}{p}} < \infty. 
\]
\end{itemize}

\medskip

Now recall that $f$ is a strong solution to the Vlasov Eq. \eqref{PDE}. Therefore $\bar f_N$ solves 
\begin{equation}
\label{NProduct}
\partial_t \bar{f}_N + L_N \bar{f}_N = \bar{f}_N R_N + (\eps- \eps_N) \sum_{i=1}^N \Delta_{v_i} \bar{f}_N,
\end{equation}
where  
\begin{equation}
R_N= \sum_{i=1}^N \left\{ \frac{1}{N} \sum_{j \ne i} K(x_i-x_j) \cdot \nabla_{v_i} \log f(x_i, v_i) -K \star \rho(x_i) \cdot \nabla_{v_i} \log f(x_i, v_i) \right\}.
\end{equation}
With the convention that $K(0) =0$, this is equivalent to the definition \eqref{RN}.

From this point the initial calculations exactly follow the proof of Theorem \ref{weakstrongpde} as given in the appendix.
Since $f_N$ is a weak solution to the Liouville Eq. according to Prop. \ref{WeaSolLio}
\[\begin{split}
H_N(t)&=\frac{1}{N}\int_{(\Omega \times \mathbb{R}^d)^N} f_N \log (\frac{f_N}{\bar f_N}) \ud Z=\frac{1}{N}\int  f_N\,\log f_N-\frac{1}{N}\int  f_N\log \bar f_N\\
&\leq \frac{1}{N}\int  f^0_N\,\log     f^0_N-\frac{\eps_N}{N}\int_0^t \int\frac{|\nabla_V  f_N|^2}{f_N}-\frac{1}{N}\int  f_N\log \bar f_N,
\end{split}\]
 per the assumption of dissipation of entropy for $f_N$ in Prop. \ref{WeaSolLio}.

Since $\bar f_N$ is smooth, $\log \bar{f}_N$ can be used as a test function  against $f_N$ which is a weak solution to the Liouville Eq. \eqref{Liouville} so that
\[\begin{split}
&\int f_N\, \log \bar f_N=\int f^0_N\,\log \bar f^0_N+\int_0^t\!\int f_N (s,X,V)\,(\partial_t \log \bar f_N+L_N^*\,\log \bar f_N)\ud Z \ud s,
\end{split}
\]
where 
\begin{equation*}
L_N^* =\sum_{i=1}^Nv_i \cdot \nabla_{x_i} + \frac{1}{N}\sum_{i=1}^N \sum_{j \ne i} K(x_i-x_j) \cdot \nabla_{v_i}+\eps_N\sum_{i=1}^N \Delta_{v_i}.
\end{equation*}
Since $\bar f_N$ is a strong solution to \eqref{NProduct}, this leads to 
\begin{equation*}
\begin{split}
&\int f_N\, \log \bar f_N=\int f^0_N\,\log \bar{f}^0_N+\int_0^t\!\int  f_N\,R_N \ud Z \ud s\\
+&\eps_N\,\int_0^t\!\int f_N\,\left(\frac{\Delta_V \bar f_N}{\bar f_N}+ \Delta_V \log \bar f_N\right) \ud Z \ud s + (\eps -\eps_N) \int_0^t \int f_N \frac{\Delta_V \bar{f}_N}{\bar{f}_N} \ud Z \ud s.
\end{split}
\end{equation*}
Hence, 
\begin{equation}
\label{RelativeEntropy}
\begin{split}
H_N(t)  &\leq H_N(0) -\frac{1}{N} \int_0^t \int f_N R_N  \ud Z \ud s \\&-\frac{\eps_N}{N} \int_0^t \int \left[ \frac{|\nabla_V f_N|^2}{f_N} +f_N \left( \frac{\Delta_V \bar{f}_N}{\bar{f}_N}+ \Delta_V \log \bar{f}_N\right)\right] \ud Z \ud s \\
&-\frac{\eps-\eps_N}{N}  \int_0^t \int f_N \frac{\Delta_V \bar{f}_N}{\bar{f}_N} \ud Z \ud s .
\end{split}
\end{equation}
We now treat the three types of the choices of $\eps_N$ separately. 

\smallskip

\noindent\textbf{Case I}:  $\eps_N =\eps \geq 0$. In this case, the last term in the right-hand side of \eqref{RelativeEntropy} vanishes. Classical entropy estimates show that
\[
\int\frac{|\nabla_V  f_N|^2}{f_N}+\int f_N\,\left(\frac{\Delta_V \bar f_N}{\bar f_N}+ \Delta_V \log \bar f_N\right) \ud Z \geq 0,
\]
see the appendix for detailed calculations.

Therefore we finally obtain that
\begin{equation}
\label{EvolutionofEntropy}
\begin{split}
 H_N(t) &\leq  H_N(0)-\frac{1}{N} \int_0^t\int f_N R_N \ud Z\ud s.\\
  \end{split}
\end{equation}
\smallskip

\noindent\textbf{Case II}: $\eps_N \to \eps >0$. The terms in \eqref{RelativeEntropy} induced by randomness can be bounded by the entropy of $f_N^0$,
\[
\begin{split}
& -\frac{1}{N} \int_0^t \int \left[ \eps_N \frac{|\nabla_V f_N|^2}{f_N} + \eps f_N  \frac{\Delta_V \bar{f}_N}{\bar{f}_N}+  \eps_N f_N \Delta_V \log \bar{f}_N\right] \ud Z \ud s \\
&\qquad= -\frac{1}{N} \int_0^t \int f_N \eps | \nabla_V \log \bar{f}_N -\frac{\eps+ \eps_N }{2\eps} \nabla_V \log f_N|^2 \ud Z \ud s\\
&\qquad\quad + \frac{(\eps-\eps_N)^2}{4\eps} \int_0^t \int \frac{|\nabla_V f_N|^2}{f_N} \ud Z \ud s 
\\
&\qquad \leq \frac{(\eps-\eps_N)^2}{4\eps}\frac{1}{N} \int_0^t \int \frac{|\nabla_V f_N|^2}{f_N} \ud Z \ud s   \leq  \frac{(\eps-\eps_N)^2}{4 \eps \eps_N } \frac{1}{N} \int f_N^0 \log f_N^0 .
\end{split}
\]
Therefore, we obtain that 
\begin{equation}
\label{EvolutionofEntropyCor1}
\begin{split}
 H_N(t) &\leq  H_N(0)-\frac{1}{N} \int_0^t\int f_N R_N \ud Z\ud s + \alpha_N,\\
  \end{split}
\end{equation}
where $\alpha_N \to 0$ as $N \to \infty$. 

\smallskip

\noindent\textbf{Case III}: $\eps_N \to \eps =0$. This is the vanishing randomness case, that is  there is no diffusion in the limit Vlasov equation. The terms in \eqref{RelativeEntropy} induced by randomness in $N-$particle system can also be bounded but by some moment bounds for $f_N^0$,
\[
\begin{split}
& S(\eps_N):= -\frac{\eps_N}{N} \int_0^t \int \left[\frac{|\nabla_V f_N|^2}{f_N} + f_N \Delta_V \log \bar{f}_N\right] \ud Z \ud s\\
=& -\frac{\eps_N}{N} \int_0^t \int \frac{|\nabla_V f_N|^2}{f_N} + \frac{\eps_N}{N} \int_0^t \int \nabla_V f_N  \cdot \nabla_V \log \bar{f}_N \\
\leq &\ \frac{\eps_N}{4N } \int_0^t \int  f_N |\nabla_V \log \bar{f}_N|^2 \ud Z \ud s. 
\end{split}
\]
This is the why we add here extra moment restrictions. Recall  that 
\[
|\nabla_v \log f | \leq |\nabla \log f | \leq C (1+ |x|^k + |v|^k|).
\] 
Therefore,  
\[
\begin{split}
 S(\eps_N)\leq  \frac{\eps_N}{4} \left(\frac{1}{N} \int_0^t \int \sum_{i=1}^N (1 +|x_i|^{2k} +|v_i|^{2k}) f_N \ud Z \ud s  \right) \to 0,
\end{split}
\]
as $N \to \infty$. Hence, we also obtain \eqref{EvolutionofEntropyCor1} in this case with $\alpha_N \to 0$ as $N \to \infty$. 

\medskip

Now we can proceed to prove the estimate for $H_N(t)$. 
Recall the Frenchel's inequality for the function $u(x) =x \log x$: For all $x,\;y\geq 0$ 
\[
xy \leq x \log x + \exp(y-1).
\]
Hence for  $\nu>0$
\begin{equation*}
-f_N R_N \leq \frac{\bar{f}_N}{\nu}\left(  \frac{f_N}{\bar{f}_N} \, \nu\,|R_N| \right) \leq \frac{\bar{f}_N}{\nu}  \left( \frac{f_N}{\bar{f}_N} \log(\frac{f_N}{\bar{f}_N}) + \exp(\nu\,|R_N|)\right).
\end{equation*}
Therefore 
\begin{equation}
\label{Evolution}
H_N(t) \leq  H_N(0)+\alpha_N  +\frac{1}{\nu}\int_0^t H_N(s)\,\ud s
+ \frac{1}{\nu}\frac{1}{N}\int_0^t  \int \bar{f}_N \exp(\nu\,|R_N|)  \ud Z\,\ud s. 
\end{equation}
Now define $\tilde K=\nu\, K$ and take $\nu$ s.t. 
\[
\|\tilde K\|_{L^\infty}\,\sup_p \frac{M_p}{p}=\nu\,\|K\|_{L^\infty}\,\sup_p \frac{M_p}{p}\leq\frac{1}{16\,e^2}.
\]
We may apply Theorem \ref{Main Theorem} to $\tilde K$ and $\tilde R_N=\nu\,R_N$. This implies that 
\[
L=\sup_N \sup_{t\in [0,\ T]} \int \bar{f}_N \exp(\nu\,|R_N|)  \ud Z<\infty.
\]
Inserting this in \eqref{Evolution} gives
\[
H_N(t) \leq  H_N(0)+ \alpha_N   + \frac{1}{\nu}\int_0^t H_N(s)\,\ud s+\frac{L t}{\nu N},
\]
and up to time $T >0$, by Gronwall's inequality   
\begin{equation}
\label{estimateofentropy1}
H_N(f_N\vert \bar{f}_N)(t) \leq \left( H_N(f_N\vert \bar{f}_N)(0) + \alpha_N +\frac{L T}{\nu N} \right)\; \exp(t/\nu),
\end{equation}
which gives the first part of Theorem \ref{propchaos}.

Next apply the estimates in \cite{HM}, \cite{MM} and \cite{MMW}. In particular
by the properties of relative entropy functional, we have for any fixed $k \geq 1$, 
\[
H_k(f_{N,k} \vert f^{\otimes k})= \frac{1}{k} \int_{(\Omega \times \mathbb{R}^d)^k} f_{N, k} \log\left( \frac{f_{N,k}}{f^{\otimes k}}\right)\ud z_1 \cdots \ud z_k
\leq H_N(f_N \vert \bar{f}_N) \longrightarrow 0, 
\]
as $N \to \infty$. 

The classical  Csisz\'ar-Kullback-Pinsker inequality (see chapter 22 in \cite{V}) then implies that
\[
\|f_{N,k}-f^{\otimes k}\|_{L^1}  \leq \sqrt{2k H_k(f_{N, k}\vert f^{\otimes k})} \to 0 
  \]
as $N \to \infty$. 
This completes the proof of Theorem \ref{propchaos}. 
\subsection{The scaling of $R_N$} 
The full proof of Theorem \ref{Main Theorem} is given in the next section but we present here some of the basic scaling properties of $R_N$.

A trivial bound for $|R_N|$ is simply
\begin{equation}
|R_N| \leq (2 \|K\|_{L^\infty} \|\nabla_v \log f\|_{L^\infty})\, N. 
\end{equation}
However inserting this bound in \eqref{Evolution} would only give that $H_N(t)=O(1)$ without any chance of converging. Instead Theorem \ref{Main Theorem} essentially proves that $R_N$ is of order $1$ and not of order $N$.

To get 
\[
\int_{(\Omega \times \mathbb{R}^d)^N} \bar{f}_N \exp(|R_N|) \ud Z \leq C < \infty,
\]
where $C$ doesn't depend on $N$,  we expand $\exp(|R_N|)$ by Taylor expansion.  Note though that  
\[ 
\begin{split}
&\frac{1}{(2k+1)!} |R_N|^{2k+1} \leq \frac{1}{(2k+1)!} |R_N|^{2k} \left(  \frac{2k+1}{2} + \frac{1}{2(2k+1)} |R_N|^2 \right)\\
\leq & \frac{1}{2} \frac{1}{(2k)!} |R_N|^{2k} + \frac{1}{(2k+2)!} |R_N|^{2k+2},
\end{split}
\]
so that we only have to bound the even terms and have 
\begin{equation*}
\exp(|R_N|)=\sum_{k=0}^\infty \frac{1}{k!} |R_N|^k \leq 3 \sum_{k=0}^\infty \frac{1}{(2k)!} |R_N|^{2k}. 
\end{equation*}
Consequently, we have 
\begin{equation}
\label{eventerms}
\int \bar{f}_N \exp(|R_N|) \ud Z \leq 3 \sum_{k=0}^{\infty} \frac{1}{(2k)!} \int |R_N|^{2k} \bar{f}_N \ud Z.
\end{equation}
The basic idea of the proof for Theorem \ref{Main Theorem} is to expand the sum defining $R_N$ in $R_N^{2k}$ and show that a large number of terms vanish under integral with respect to $\bar{f}_N$. 

For the moment we just present two basic calculations, indicative of the type of cancellations that we use
\begin{lemma}
\label{vanish1} Assume that $f\in L^\infty\cap L^1(\Omega\times\R^d)$ with $f\geq 0$ and $\int f=1$.
Assume  that $K\in L^\infty$ and that $\nabla_v  f \in L_{\text{loc}}^1 $, then 
\begin{equation*} 
\int_{\Omega^N \times (\mathbb{R}^d)^N} R_N \bar{f}_N \ud Z =0. 
\end{equation*}
\end{lemma}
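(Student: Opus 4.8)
The plan is to show that the integral vanishes term by term in the double sum defining $R_N$. Fix a pair $(i,j)$ with $1\le i,j\le N$. The corresponding summand of $R_N\,\bar f_N$ is
\[
\frac{1}{N}\,\nabla_{v_i}\log f(x_i,v_i)\cdot\bigl\{K(x_i-x_j)-K\star\rho(x_i)\bigr\}\,\prod_{k=1}^N f(x_k,v_k).
\]
The key algebraic identity is $\nabla_{v_i}\log f(x_i,v_i)\,f(x_i,v_i)=\nabla_{v_i} f(x_i,v_i)$, which rewrites this summand as
\[
\frac{1}{N}\,\bigl\{K(x_i-x_j)-K\star\rho(x_i)\bigr\}\cdot\nabla_{v_i} f(x_i,v_i)\,\prod_{k\ne i} f(x_k,v_k).
\]

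The crucial structural observation is that the bracket depends only on the position variables $x_i,x_j$ (indeed $K\star\rho(x_i)=\int K(x_i-y)\rho(y)\,\ud y$ is a function of $x_i$ alone), while the remaining factors $\prod_{k\ne i} f(x_k,v_k)$ do not involve $v_i$. I would therefore integrate in $v_i$ first. Since $f\in L^1$ gives $\rho\in L^1$ and hence $K\star\rho\in L^\infty$, the bracket is bounded by $2\|K\|_{L^\infty}$; together with $\nabla_{v_i} f\in L^1$ and $\int f=1$ this makes the full integrand absolutely integrable, so Fubini justifies both the change in order of integration and the exchange of the finite sum with the integral. Integrating the only $v_i$-dependent factor gives
\[
\int_{\R^d}\nabla_{v_i} f(x_i,v_i)\,\ud v_i=0,
\]
so every summand integrates to zero and summing over $i,j$ yields $\int_{\Omega^N\times(\R^d)^N} R_N\,\bar f_N\,\ud Z=0$. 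The diagonal terms $i=j$ need no separate treatment: with the convention $K(0)=0$ they reduce to $-\tfrac1N\,\nabla_{v_i}\log f(x_i,v_i)\cdot K\star\rho(x_i)$ times $\bar f_N$, which the same $v_i$-integration annihilates.

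The argument is essentially bookkeeping, so there is no serious obstacle; the only point requiring care is the vanishing of $\int_{\R^d}\nabla_{v_i} f\,\ud v_i$, which is a fundamental-theorem-of-calculus statement needing enough decay of $f$ in the velocity variable to kill the boundary contribution at infinity. Under the hypotheses where this lemma is applied (where $f$ has exponential tails, cf. \eqref{explambda}) this is immediate; at the stated level of generality one reads the assumption $\nabla_v f\in L^1_{\text{loc}}$ together with $f\in L^1$ as providing, for a.e. $x_i$, a map $v_i\mapsto f(x_i,v_i)$ in $W^{1,1}(\R^d)$, for which the identity is classical.
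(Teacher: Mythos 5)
Your proof is correct and follows essentially the same route as the paper: rewrite $f\,\nabla_{v_i}\log f=\nabla_{v_i}f$, observe that the bracket $K(x_i-x_j)-K\star\rho(x_i)$ and the remaining factors are independent of $v_i$, and integrate in $v_i$ first so that $\int_{\R^d}\nabla_{v_i}f\,\ud v_i=0$ kills each summand. The paper justifies that last vanishing by truncating with a cutoff $\varphi_L$ and letting $L\to\infty$, which is just the standard proof of the $W^{1,1}$ identity you invoke, so the two arguments coincide in substance.
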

\begin{proof} Simply expanding $R_N$, we get 
\[\begin{split}
\int R_N \bar{f}_N  \ud Z =\frac{1}{N} \sum_{i, j=1}^N \int \nabla_{v_i}\log f(x_i, v_i) \cdot &\{K(x_i-x_j)-K\star\rho(x_i)\}\\
& f(x_1, v_1) \cdots f(x_N, v_N) \ud Z.
\end{split}\]
For fixed $(i,j)$, notice that $f(x_i, v_i) \nabla_{v_i} \log f(x_i, v_i)= \nabla_{v_i} f(x_i, v_i)$, and no other terms depend on $v_i$. Integration by parts  thus implies that the integral vanishes. Indeed, by Fubini's Theorem,  without loss of generality, we only need to check 
\begin{equation}
\label{L1I1}
\int \nabla_{v} f(x, v)\, K\star\rho(x, v) \ud x \ud v =0
\end{equation}
and  
\begin{equation}
\label{L1I2}
\int \nabla_{v_1} f(x_1, v_1) \{K(x_1-x_2)- K\star \rho(x_1) \} \rho(x_2) \ud x_1 \ud v_1 \ud x_2=0.
\end{equation}
Both \eqref{L1I1} and \eqref{L1I2} are easily proved by truncating the integral with some $\varphi_L$ such as
\[
\varphi_L(x, v) =
\begin{cases}
1, &\text{\  if }  |(x, v)| \leq L, \\
\in (0, 1) &\text{\ if }  L <|(x,v)|\leq 2L, \\
0, &\text{\ if } |(x, v)| > 2L,
\end{cases}
\]
and letting $L$ go to $\infty$.
%
%

\end{proof}
Lemma \ref{vanish1} only illustrates the simplest cancellation in $R_N$. It is also straightforward to show some orthogonality property between the terms in the sum defining $R_N$. This leads to the first indication that indeed $R_N$ is of order $1$ and not $N$. 
\begin{lemma} 
\label{vanish2}
Assume that $f\in L^\infty\cap L^1(\Omega\times\R^d)$ with $f\geq 0$ and $\int f=1$.
Assume  that $K\in L^\infty$ and that $\nabla_v  f \in L_{\text{loc}}^2 $, then  
\begin{equation*}
\int_{\Omega^N \times (\mathbb{R}^d)^N} |R_N|^2 \bar{f}_N \ud Z \leq 4 \|K\|_{L^\infty}^2 \int_{\Omega \times \mathbb{R}^d} |\nabla_v \log f|^2 f \ud x \ud v.
\end{equation*}
\end{lemma}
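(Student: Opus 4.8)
The plan is to square $R_N$, integrate against $\bar f_N$, and extract the surviving terms through two successive cancellations: an integration by parts in velocity that collapses the ``outer'' index pair to the diagonal (exactly the mechanism of Lemma \ref{vanish1}), followed by the fact that each increment is centered in its free position variable against $\rho$. To keep the bookkeeping light I would write $\phi_i=\nabla_{v_i}\log f(x_i,v_i)$ and $g_{ij}=K(x_i-x_j)-K\star\rho(x_i)$, so that $R_N=\frac1N\sum_{i,j}\phi_i\cdot g_{ij}$ and
\[
\int |R_N|^2\,\bar f_N\ud Z=\frac{1}{N^2}\sum_{i,j,k,l=1}^N\int(\phi_i\cdot g_{ij})(\phi_k\cdot g_{kl})\,\bar f_N\ud Z.
\]

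The first step is to show every term with $i\neq k$ vanishes. The factors $g_{ij}$, $g_{kl}$ and $\phi_k$ are all independent of $v_i$, while $\phi_i\,f(x_i,v_i)=\nabla_{v_i}f(x_i,v_i)$; integrating in $v_i$ and using $\int_{\R^d}\nabla_{v_i}f\ud v_i=0$ kills the term. This leaves only $i=k$,
\[
\int|R_N|^2\bar f_N\ud Z=\frac{1}{N^2}\sum_{i=1}^N\sum_{j,l=1}^N\int(\phi_i\cdot g_{ij})(\phi_i\cdot g_{il})\,\bar f_N\ud Z.
\]
The second step removes the off-diagonal part of the $(j,l)$ sum via the centering identity $\int g_{ij}\,\rho(x_j)\ud x_j=0$, which holds componentwise because $\int K(x_i-x_j)\rho(x_j)\ud x_j=K\star\rho(x_i)$ and $\int\rho=1$. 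Whenever one index, say $j$, is distinct from both $i$ and $l$, the variable $x_j$ enters only through $g_{ij}$ and $f(x_j,v_j)$, so integrating first in $(x_j,v_j)$ produces the factor $\int g_{ij}\rho(x_j)\ud x_j=0$. This disposes of all terms with $j\neq l$, as well as the mixed terms where exactly one of $j,l$ equals $i$; only the diagonal $j=l$ survives.

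The final step is a direct bound on the diagonal. For $j=l$ one has $(\phi_i\cdot g_{ij})^2\leq|\phi_i|^2|g_{ij}|^2$ with $|g_{ij}|\leq|K(x_i-x_j)|+|K\star\rho(x_i)|\leq 2\|K\|_{L^\infty}$ (using $\int\rho=1$), so integrating out all variables except $(x_i,v_i)$ gives $\int(\phi_i\cdot g_{ij})^2\bar f_N\ud Z\leq 4\|K\|_{L^\infty}^2\int|\nabla_v\log f|^2 f\ud x\ud v=4\|K\|_{L^\infty}^2 M_2^2$. There are $N(N-1)$ such terms with $j=l\neq i$, and $N$ terms with $j=l=i$ (where $|g_{ii}|=|K\star\rho(x_i)|\leq\|K\|_{L^\infty}$ yields the smaller bound $\|K\|_{L^\infty}^2 M_2^2$); summing and dividing by $N^2$ gives $(4-3/N)\|K\|_{L^\infty}^2 M_2^2\leq 4\|K\|_{L^\infty}^2 M_2^2$, which is the claim. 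The only genuinely delicate point is justifying the two integrations at infinity — the vanishing of $\int\nabla_{v_i}f\ud v_i$ and the centering $\int g_{ij}\rho\ud x_j=0$ — under the sole hypothesis $\nabla_v f\in L^2_{\mathrm{loc}}$; as in Lemma \ref{vanish1} I would handle this by truncating with $\varphi_L$ and passing to the limit, the integrability needed for the limit coming from $|g_{ij}|\leq 2\|K\|_{L^\infty}$ together with $f,\,|\nabla_v\log f|^2 f\in L^1$. Everything else is a routine bookkeeping of the index configurations.
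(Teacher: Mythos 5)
Your proof is correct and follows essentially the same route as the paper: expand $|R_N|^2$, kill the $i\neq k$ terms by integrating by parts in $v_i$, kill the $j\neq l$ terms by the centering $\int\{K(x_i-x_j)-K\star\rho(x_i)\}\rho(x_j)\,\ud x_j=0$, and bound the $N^2$ surviving diagonal terms by $4\|K\|_{L^\infty}^2M_2^2$. Your extra care with the $j=l=i$ terms and the explicit truncation remark are harmless refinements of the paper's argument.
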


\begin{proof}
For convenience we denote
\[
F_i=\nabla_{v_i} \log f(x_i, v_i),\quad k_{i, j}=K(x_i-x_j) -K\star \rho(x_i).
\]
Simply expand the left-hand side 
\[
\int|R_N|^2 \bar{f}_N \ud Z 
=\frac{1}{N^2}\sum_{i_1, i_2=1}^N \sum_{j_1, j_2=1}^N \int F_{i_1} \cdot k_{i_1, j_1}  F_{i_2} \cdot k_{i_2, j_2} \bar{f}_N \ud Z. 
\]
If $i_1 \ne i_2$, then by integration by parts, 
\[
\int F_{i_1} \cdot k_{i_1, j_1}  F_{i_2} \cdot k_{i_2, j_2} \bar{f}_N \ud Z=0. 
\] 
Indeed, without loss of generality, let $i_1=1$ and $i_2=2$, then 
\[\begin{split}
&\int F_{i_1} \cdot k_{i_1, j_1}  F_{i_2} \cdot k_{i_2, j_2} \bar{f}_N \ud Z\\
&\qquad =\int_{(\Omega \times \mathbb{R}^d)^2} \nabla_{v_1} f(x_1, v_1)\cdot k_{1, j_1} \nabla_{v_2} f(x_2, v_2) \cdot k_{2, j_2} \ud z_1 \ud z_2 =0,
\end{split}\]
by integration by parts since $k_{1, j_1}$ and $k_{2, j_2}$ do not depend any $v$ variables. 

If $i_1=i_2$ while $j_1 \ne j_2$, then at least one of $\{j_1, j_2\}$ is not equal to $i_1$, then this type of integral vanishes by the definition of convolution. Indeed, without lost of generality, let assume that $i_1=i_2=1$ and $j_1=2$ while $j_2 \ne 2$, then 
\[
\begin{split}
&\int_{(\Omega \times \mathbb{R}^d)^N} F_{i_1} \cdot k_{i_1, j_1}  F_{i_2} \cdot k_{i_2, j_2} \bar{f}_N \ud Z \\
&\ =
\int_{(\Omega \times \mathbb{R}^d)^N} \left[ \nabla_{v_1} \log f(x_1, v_1) \cdot \{K(x_1-x_2)-K\star\rho(x_1)\} \right] \\ & \qquad \cdot  \left[ \nabla_{v_1} \log f(x_1, v_1) \cdot \{K(x_1-x_{j_2})-K\star\rho(x_1)\} \right] \bar{f}_N \ud Z  \\
&\ = \int_{(\Omega \times \mathbb{R}^d)^{N-1}} \left[ \nabla_{v_1} \log f(x_1, v_1) \cdot \{K(x_1-x_{j_2})-K\star\rho(x_1)\} \right] \Pi_{i \ne 2} f(x_i, v_i ) \ud z_i  \\
&\qquad \cdot \left( \nabla_{v_1} \log f(x_1, v_1) \cdot  \int_{\Omega} \{K(x_1 -x_2)-K\star\rho(x_1)\} \rho(x_2) \ud x_2 \right)\\
&\ =0,
\end{split}
\]
where we used that    
\[
\int_{\Omega} \{K(x_1 -x_2)-K \star \rho(x_1)\} \rho(x_2) \ud x_2 =0,
\]
by  the definition of convolution, and since $\rho$ has integral $1$.

Hence after integration only those terms with indices $i_1=i_2$ and $j_1 =j_2$ contribute to the summation. That is  
\[
\begin{split}
&\frac{1}{N^2}\sum_{i_1, i_2=1}^N \sum_{j_1, j_2=1}^N \int F_{i_1} \cdot k_{i_1, j_1}  F_{i_2} \cdot k_{i_2, j_2} \bar{f}_N \ud Z\\
=& \frac{1}{N^2} \sum_{i=1}^N \sum_{j=1}^N \int (F_i \cdot k_{i,j})^2 f_N \ud Z \leq 4 \|K\|^2_{L^\infty} \int_{\Omega \times \mathbb{R}^d} |\nabla_v \log f |^2 f \ud x \ud v,
\end{split}
\] 
which completes the proof.
\end{proof}
\section{Main Estimates: Proof of Theorem \ref{Main Theorem} \label{Main Estimate}}
%
From the remark \eqref{eventerms}, it is enough to bound
\[
\sum_{k=0}^{\infty} \frac{1}{(2k)!} \int |R_N|^{2k} \bar{f}_N \ud Z 
\] 
which we divide in two different cases: $k$ is small compared to $N$ or $k$ is comparable or larger than $N$. The first part,  $3k \leq N$, is more delicate and requires some preparatory combinatorics work. The second part,  $3k >N$, is almost trivial since now the coefficients $\frac{1}{(2k)!}$ dominates. The trivial bound for $|R_N|$ is good enough in this case. 

Accordingly Theorem \ref{Main Theorem} is a consequence of the following two propositions
\begin{Prop}
\label{ksmall}
For $3k \leq N$, we have \begin{equation*}
\sum_{k=0}^{\lfloor \frac{N}{3}\rfloor } \frac{1}{(2k)!} \int |R_N|^{2k} \bar{f}_N \ud Z \leq 1 + 2\sum_{k=1}^{\lfloor \frac{N}{3}\rfloor } k \left(8e^2 \|K\|_{L^\infty} \left(\sup_p \frac{M_p}{p} \right) \right)^{2k}.
\end{equation*}

\end{Prop}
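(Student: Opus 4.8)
The plan is to expand the even power $|R_N|^{2k}=R_N^{2k}$ into a sum over $2k$ index pairs and integrate term by term against $\bar f_N$, discarding the many terms killed by the two cancellations already isolated in Lemmas \ref{vanish1} and \ref{vanish2}. Writing $F_i=\nabla_{v_i}\log f(x_i,v_i)$ and $k_{ij}=K(x_i-x_j)-K\star\rho(x_i)$ as in the proof of Lemma \ref{vanish2},
\[
\int|R_N|^{2k}\bar f_N\ud Z=\frac{1}{N^{2k}}\sum_{i_1,j_1,\dots,i_{2k},j_{2k}}\int\prod_{\ell=1}^{2k}\bigl(F_{i_\ell}\cdot k_{i_\ell j_\ell}\bigr)\,\bar f_N\ud Z.
\]
First I would record the two vanishing rules. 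Since $f\,F_m=\nabla_{v_m}f$ and every $k_{i_\ell j_\ell}$ depends only on the $x$ variables, integration by parts in $v_m$ (exactly as in Lemma \ref{vanish1}) annihilates any term in which an index $m$ occurs exactly once among $i_1,\dots,i_{2k}$. Likewise, integrating first in $(x_m,v_m)$ and using $\int k_{im}\,\rho(x_m)\ud x_m=0$ (as in Lemma \ref{vanish2}) annihilates any term in which $m$ occurs once as a $j$ and never as an $i$. I will call a configuration \emph{admissible} if it survives both rules: every value occurring as an $i$ occurs at least twice, and every value occurring only as $j$'s occurs at least twice.

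For each admissible configuration I would then use the crude pointwise bound $|k_{i_\ell j_\ell}|\le 2\|K\|_{L^\infty}$ together with Hölder in the velocity variables. If the distinct $i$-values are $a_1,\dots,a_q$ with multiplicities $m_1,\dots,m_q\ge2$ (so $\sum_t m_t=2k$ and $q\le k$), then bounding all the $k$-factors by $2\|K\|_{L^\infty}$ decouples the spatial integrals: integrating out all velocities (each $\int|\nabla_v\log f|^{m_t}f\,\ud v$ produces $M_{m_t}^{m_t}=\int|\nabla_v\log f|^{m_t}f\,\ud x\ud v$ after integrating $x_{a_t}$, while every spectator position integrates to $\int\rho=1$) gives, with $B:=\sup_p M_p/p$,
\[
\Bigl|\int\prod_{\ell}\bigl(F_{i_\ell}\cdot k_{i_\ell j_\ell}\bigr)\,\bar f_N\ud Z\Bigr|\le(2\|K\|_{L^\infty})^{2k}\prod_{t=1}^q M_{m_t}^{m_t}\le(2\|K\|_{L^\infty}\,B)^{2k}\prod_{t=1}^q m_t^{m_t},
\]
using $M_{m_t}\le m_t B$. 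Note this bound no longer sees the $j$-structure; the $j$-cancellation is used only to restrict which configurations are nonzero, not inside the estimate.

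The heart of the matter is then purely combinatorial: count the admissible configurations, organized by the $i$-multiplicity profile and by how the $j$'s are distributed (bound to an existing $i$-value, or solitary in blocks of size $\ge2$), keeping exact track of the power of $N$. The key structural fact is that an admissible configuration uses at most $2k$ distinct index values --- at most $k$ from the doubled $i$'s and at most $k$ from the doubled solitary $j$'s --- so the number of ways to choose the values is $\lesssim N^{2k}$ and is exactly compensated by the prefactor $N^{-2k}$, while configurations with fewer distinct values are lower order in $N$. The restriction $3k\le N$ keeps the falling factorials arising in these choices uniformly comparable to the corresponding powers of $N$. Summing the per-configuration bound $(2\|K\|_{L^\infty}B)^{2k}\prod_t m_t^{m_t}$ against the number of admissible configurations, and using Stirling-type estimates (of the form $m^m/m!\le e^m$ and $\binom{2k}{k}\le 4^k$, which generate the factors of $e$) to control both the multinomial slot-counts and the products $\prod_t m_t^{m_t}$, should collapse everything into the single geometric term $2k\,(8e^2\|K\|_{L^\infty}B)^{2k}$; adding the $k=0$ term, which is simply $\int\bar f_N\ud Z=1$, gives the claimed inequality.

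The step I expect to be the main obstacle is exactly this combinatorial bookkeeping: enumerating admissible configurations with mixed and higher multiplicities (in particular $j$-indices coinciding with $i$-indices), verifying the power-counting in $N$ so that $N^{-2k}$ precisely absorbs the value-choices, and tuning the Stirling estimates so that every sub-case fits under the single clean constant $8e^2$. As a sanity check, the dominant case --- all $i$- and all solitary $j$-multiplicities equal to $2$ --- gives a contribution bounded by roughly $(2k)!\,(4\|K\|_{L^\infty}B)^{2k}$, since the two pairings contribute $\bigl(\tfrac{N^k}{k!}\tfrac{(2k)!}{2^k}\bigr)^2$ choices and $\binom{2k}{k}\le4^k$ reduces the double factorial; this shows there is ample room between $4$ and $8e^2$ to absorb the remaining configurations together with the extra factor $2k$.
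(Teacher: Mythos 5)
Your plan follows exactly the paper's strategy: expand $|R_N|^{2k}$ over multi-indices, kill terms via the two cancellation mechanisms (an $i$-value of multiplicity one dies by integration by parts in $v$; a $j$-value of multiplicity one not appearing among the $i$'s dies by the convolution identity $\int k_{im}\rho(x_m)\,\ud x_m=0$), bound each surviving term by $(2\|K\|_{L^\infty})^{2k}\prod_t M_{m_t}^{m_t}\le(2\|K\|_{L^\infty}B)^{2k}\prod_t m_t^{m_t}$, and then count admissible configurations with the power counting ``at most $2k$ distinct values, so $\lesssim N^{2k}$ choices against the prefactor $N^{-2k}$.'' All of these structural ingredients, including the role of $3k\le N$ and the Stirling-type inputs $m^m\le e^m m!$ and $\binom{2k}{k}\le 4^k$, are correctly identified, and your sanity check on the all-pairs configuration is sound.

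The gap is that the combinatorial enumeration itself --- which is the entire content of the proposition, since the claim is a quantitative bound with the specific constant $8e^2$ --- is deferred rather than carried out; you say the bookkeeping ``should collapse'' into $2k(8e^2\|K\|_{L^\infty}B)^{2k}$ but do not derive it. The paper fills this in with three lemmas you would need to reproduce in some form: (i) a bound on the number of $i$-multi-indices with all multiplicities $\ge 2$ and $l$ distinct values, namely $\binom{N}{l}$ times the multinomial count $\sum\frac{(2k)!}{(a_1)!\cdots(a_l)!}$ over $a_\nu\ge2$ with $\sum a_\nu=2k$, the inner sum being controlled by a stars-and-bars count $\binom{2k-l-1}{l-1}\le\binom{2k}{k}$ after extracting $(2k)!$ via $a^a\le e^a a!$; (ii) a bound on the number of admissible $J_{2k}$ for fixed $I_{2k}$, organized by how many slots carry values outside $S(I_{2k})$, giving $|\mathcal{P}_{N,2k}^{I_{2k}}|\le 2ke^k2^{2k}k^kN^k$ via the binomial identity $\sum_h\binom{2k}{h}l^{2k-h}(k(N-l))^{h/2}=(l+\sqrt{k(N-l)})^{2k}$; and (iii) the final Stirling estimate $\frac{k^k}{N^k}\binom{N}{k}\le\sqrt{3/(2\pi k)}\,e^k$, which is exactly where $3k\le N$ enters (through $N/(N-k)\le 3/2$). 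One point of bookkeeping your sketch glosses over: the weight $\prod_t m_t^{m_t}$ is not constant across configurations with the same number of distinct $i$-values, so it must be summed \emph{jointly} with the multinomial slot-count (the paper's quantity $U_{N,2k}^l=\sum\frac{(2k)!}{\prod(a_\nu)!}\prod a_\nu^{a_\nu}$), not factored out; this is precisely where $a^a\le e^a a!$ is used to cancel the factorials and reduce to the pure stars-and-bars count. Without executing these steps the constant $8e^2$ and the factor $2k$ in the statement remain unjustified.
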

\begin{Prop}
\label{kbig}
For $3k > N$, we have 
\begin{equation*} 
\sum_{k=\lfloor \frac{N}{3}\rfloor+1}^\infty \frac{1}{(2k)!} \int |R_N|^{2k} \bar{f}_N \ud Z \leq \sum_{k=\lfloor \frac{N}{3}\rfloor+1}^\infty \left(5e^2 \|K\|_{L^\infty}\left( \sup_p \frac{M_p}{p}\right) \right)^{2k}. 
\end{equation*}
\end{Prop}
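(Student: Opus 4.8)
The plan is to exploit the fact that in this regime the coefficient $\frac{1}{(2k)!}$ is strong enough that a crude bound on $|R_N|$ suffices, provided the bound is crude only in $K$ and keeps $\nabla_v\log f$ integrable against $f$ rather than bounded in $L^\infty$ (the latter being unavailable, since the hypotheses only grant exponential moments through $\sup_p\frac{M_p}{p}<\infty$). First I would write $R_N=\frac{1}{N}\sum_{i,j=1}^N F_i\cdot k_{i,j}$ with $F_i=\nabla_{v_i}\log f(x_i,v_i)$ and $k_{i,j}=K(x_i-x_j)-K\star\rho(x_i)$, and use $|k_{i,j}|\le 2\|K\|_{L^\infty}$ to obtain the pointwise bound $|R_N|\le 2\|K\|_{L^\infty}\sum_{i=1}^N|F_i|$. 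Raising to the power $2k$ and expanding by the multinomial theorem over the $N$ indices gives
\[
\Big(\sum_{i=1}^N|F_i|\Big)^{2k}=\sum_{|\alpha|=2k}\frac{(2k)!}{\prod_{i}\alpha_i!}\prod_{i=1}^N|F_i|^{\alpha_i},
\]
the sum running over multi-indices $\alpha=(\alpha_1,\dots,\alpha_N)$ with $\alpha_i\ge 0$ and $\sum_i\alpha_i=2k$.

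Next I would integrate against $\bar f_N=\prod_{i}f(x_i,v_i)$. Because this weight is tensorized and the crude bound has already removed the factors $k_{i,j}$, the integral factors variable by variable:
\[
\int\prod_{i}|F_i|^{\alpha_i}\,\bar f_N\,\ud Z=\prod_{i}\int|\nabla_v\log f|^{\alpha_i}f\,\ud z=\prod_{i}M_{\alpha_i}^{\alpha_i},
\]
with the convention $M_0^0=\int f=1$. Writing $\Lambda:=\sup_p\frac{M_p}{p}$, so that $M_p\le p\Lambda$ for all $p$, the right-hand side is at most $\Lambda^{2k}\prod_{i:\,\alpha_i\ge 1}\alpha_i^{\alpha_i}$. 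Summing over $\alpha$ then produces
\[
\int|R_N|^{2k}\,\bar f_N\,\ud Z\le(2\|K\|_{L^\infty}\Lambda)^{2k}\,(2k)!\sum_{|\alpha|=2k}\prod_{i}\frac{\alpha_i^{\alpha_i}}{\alpha_i!}.
\]

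The last step is purely combinatorial. I would invoke the Stirling-type bound $\frac{\alpha_i^{\alpha_i}}{\alpha_i!}\le e^{\alpha_i}$ (with value $1$ when $\alpha_i=0$), so each summand is at most $e^{2k}$, and then count the number of such multi-indices, which is the number of compositions of $2k$ into $N$ nonnegative parts, namely $\binom{2k+N-1}{N-1}=\binom{2k+N-1}{2k}$. This is exactly where the hypothesis $3k>N$ enters decisively: it forces $2k+N-1<5k$, so the elementary bound $\binom{n}{m}\le(en/m)^m$ gives $\binom{2k+N-1}{2k}\le\left(\frac{5e}{2}\right)^{2k}$. Collecting the constants via $2\cdot e\cdot\frac{5e}{2}=5e^2$ yields
\[
\int|R_N|^{2k}\,\bar f_N\,\ud Z\le(2k)!\,\big(5e^2\,\|K\|_{L^\infty}\,\Lambda\big)^{2k},
\]
and dividing by $(2k)!$ and summing over $k\ge\lfloor N/3\rfloor+1$ reproduces exactly the claimed inequality.

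I expect the only genuinely delicate point to be the tracking of the numerical constants and the recognition that the regime $3k>N$ is precisely what keeps the count of compositions $\binom{2k+N-1}{2k}$ growing only geometrically, at rate $(5e/2)^{2k}$, rather than faster. There is no real analytic obstacle here: unlike the small-$k$ case of Proposition \ref{ksmall}, none of the subtle cancellations in $R_N$ are needed, because the factorial weight $\frac{1}{(2k)!}$ alone absorbs the crude pointwise bound.
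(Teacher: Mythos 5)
Your proposal is correct and follows essentially the same route as the paper: the trivial bound $|R_N|\le 2\|K\|_{L^\infty}\sum_i|F_i|$, multinomial expansion, tensorized factorization into $\prod_i M_{\alpha_i}^{\alpha_i}$, the bounds $M_p\le p\Lambda$ and $\alpha^\alpha\le e^\alpha\,\alpha!$, and the count of compositions $\binom{2k+N-1}{N-1}$. The only (cosmetic) difference is that you bound this binomial coefficient via the elementary inequality $\binom{n}{m}\le(en/m)^m$, whereas the paper writes $N-1=2ks$ with $s<\tfrac32$ and applies Stirling's formula together with $(1+\tfrac1s)^s<e$; both yield the identical factor $(5e/2)^{2k}\cdot e^{2k}$ and hence the same constant $5e^2$.
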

Let us briefly explain how we can  prove Theorem \ref{Main Theorem} from Proposition  \ref{ksmall} and Proposition  \ref{kbig}.  
 
\begin{proof}[Proof of Theorem \ref{Main Theorem}] Recall that
\[
\sum_{k=1}^\infty k\, r^k=r\,\frac{d}{dr}\sum_{k=0}^\infty r^k=\frac{r}{(1-r)^2}. 
\]
Under the assumption $\|K\|_{L^\infty} \sup_p \frac{M_p}{p} < \frac{1}{8e^2}$, we have that
\[\begin{split}
\sum_{k=1}^{\lfloor \frac{N}{3}\rfloor } k \left(8e^2\|K\|_{L^\infty} \left(\sup_p \frac{M_p}{p}\right)\right)^{2k} &\leq \sum_{k=1}^{\infty } k \left(8e^2\|K\|_{L^\infty} \left(\sup_p \frac{M_p}{p}\right)\right)^{2k}\\
&=\frac{\left( 8e^2 \|K\|_{L^\infty} \left(\sup_p \frac{M_p}{p}\right)\right)^2}{\left(1-\left( 8e^2 \|K\|_{L^\infty} \left(\sup_p \frac{M_p}{p}\right)\right)^2\right)^2} < \infty,
\end{split}\]
and 
\[
\sum_{k=\lfloor \frac{N}{3}\rfloor+1}^\infty \left(5e^2 \|K\|_{L^\infty} \left(\sup_p \frac{M_p}{p}\right)\right)^{2k} \leq \sum_{k=1}^\infty \left(\frac{5}{8} \right)^{2k} \leq \frac{\left(\frac{5}{8} \right)^2}{1- \left(\frac{5}{8} \right)^2}< \infty .
\]
Hence, by (\ref{eventerms}), Proposition  \ref{ksmall} and Proposition  \ref{kbig} we have that 
\[
\begin{split}
& \int \bar{f}_N \exp(|R_N|) \ud Z \leq 3 \sum_{k=0}^{\infty} \frac{1}{(2k)!} \int |R_N|^{2k} \bar{f}_N \ud Z \\
\leq & 3 \left( 1+ 2 \sum_{k=1}^{\infty } k \left(8e^2\|K\|_{L^\infty} \left(\sup_p \frac{M_p}{p}\right)\right)^{2k}+\sum_{k=1}^\infty \left(\frac{5}{8}\right)^{2k} \right)\\
= & 3\left( 1+ \frac{2\left( 8e^2 \|K\|_{L^\infty} \left(\sup_p \frac{M_p}{p}\right)\right)^2}{\left(1-\left( 8e^2 \|K\|_{L^\infty} \left(\sup_p \frac{M_p}{p}\right)\right)^2\right)^2} + \frac{\left(\frac{5}{8} \right)^2}{1- \left(\frac{5}{8} \right)^2}\right) \\
\leq & 5 + 6\left( \frac{8e^2 \|K\|_{L^\infty} \left( \sup_p \frac{M_p}{p}\right)}{ 1- \left(8e^2 \|K\|_{L^\infty} \left( \sup_p \frac{M_p}{p}\right) \right)^2}\right)^2 .
\end{split}
\]
This completes the proof. 
\end{proof}

We now proceed to establish the above propositions. For convenience we will keep on using the notations of Lemma \ref{vanish2}
\[
F_i=\nabla_{v_i} \log f(x_i, v_i),\quad k_{i, j}=K(x_i-x_j) -K\star \rho(x_i).
\]

\subsection{The case $3k \leq  N$: Proof of Proposition \ref{ksmall}} 
We start with the general rule for cancellation in $R_N$
\begin{lemma} [General Cancellation Rule]
\label{L3}
\label{generalcancelationrule} Fix an integer $p \geq 1$. Take any pair of multi-indices $(I_p, J_p)$, where $I_p=(i_1, i_2, \cdots, i_p)$ and $J_p=(j_1, j_2, \cdots, j_p)$. All components of $I_p$ and $J_p$ are taken from the set $\{1, 2, \cdots, N\}. $ Then 
\begin{equation}
\label{integral_indicate}
\begin{split}
&\int_{(\Omega\times \mathbb{R}^d)^N } \left( \nabla_{v_{i_1}} \log f(x_{i_1}, v_{i_1} )\cdot \{K(x_{i_1}-x_{j_1})-K*\rho(x_{i_1})\} \right) \\ &\cdots \left(\nabla_{v_{i_p}} \log f(x_{i_p}, v_{i_p} )\cdot \{K(x_{i_p}-x_{j_p})-K*\rho(x_{i_p})\} \right)\bar{f}_N \ud Z =0
\end{split}
\end{equation}
 provided that one of the following statements is satisfied: \\
1) there exists one $i_\nu$,   such that $i_\nu \notin \{i_1, \cdots, i_{\nu-1}, i_\nu, \cdots, i_p \}$; \\ 2) there exists one $j_\nu$, such that $j_\nu \notin \{i_1, i_2, \cdots, i_p \}\cup \{j_1, \cdots, j_{\nu-1}, j_\nu, \cdots, j_p\}$. 
\end{lemma}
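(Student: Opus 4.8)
The plan is to reduce each of the two cases to a single integration in one well-chosen variable, exactly mirroring the two cancellation mechanisms already exhibited in Lemmas \ref{vanish1} and \ref{vanish2}. In both cases I would first invoke Fubini's theorem (legitimate since $K\in L^\infty$ and, under the running assumptions on $f$, the full integrand is absolutely integrable against $\bar f_N$) in order to single out the integral in the distinguished variable, and then check that this inner integral already vanishes; the remaining integrations are then irrelevant. So the whole argument splits into \emph{which} variable to integrate and \emph{why} the resulting one-variable integral is zero.

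\textbf{Case 1} (the label $i_\nu$ occurs only once among $i_1,\dots,i_p$). Here I would integrate in the velocity variable $v_{i_\nu}$. The key observation is that $v_{i_\nu}$ appears in the integrand in exactly two places: the factor $\nabla_{v_{i_\nu}}\log f(x_{i_\nu},v_{i_\nu})$ and the single factor $f(x_{i_\nu},v_{i_\nu})$ coming from $\bar f_N=\Pi_{i}f(x_i,v_i)$. Indeed none of the bracketed kernel terms $\{K(x_{i_\mu}-x_{j_\mu})-K\star\rho(x_{i_\mu})\}$ depends on any velocity, and by hypothesis no other factor $\nabla_{v_{i_\mu}}\log f$ with $\mu\ne\nu$ carries the index $i_\nu$. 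Using $f\,\nabla_{v_{i_\nu}}\log f=\nabla_{v_{i_\nu}}f$ and pulling the $v_{i_\nu}$-independent bracket out of the dot product, the inner integral becomes $\{K(x_{i_\nu}-x_{j_\nu})-K\star\rho(x_{i_\nu})\}\cdot\int_{\R^d}\nabla_{v_{i_\nu}}f(x_{i_\nu},v_{i_\nu})\,\ud v_{i_\nu}=0$, exactly as in Lemma \ref{vanish1}.

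\textbf{Case 2} (the label $j_\nu$ occurs neither among the $i$'s nor among the other $j$'s). Here I would integrate in the full variable $z_{j_\nu}=(x_{j_\nu},v_{j_\nu})$. By hypothesis $x_{j_\nu}$ appears in the integrand only through the single kernel factor $K(x_{i_\nu}-x_{j_\nu})$ and through the marginal $f(x_{j_\nu},v_{j_\nu})$ in $\bar f_N$: no $\nabla_{v_{i_\mu}}\log f$, no $K\star\rho(x_{i_\mu})$, and no other $K(x_{i_\mu}-x_{j_\mu})$ involves $x_{j_\nu}$. Since $\nabla_{v_{i_\nu}}\log f(x_{i_\nu},v_{i_\nu})$ does not depend on $z_{j_\nu}$, it factors out of the dot product and the inner integral reduces to $\int_{\Omega\times\R^d}\{K(x_{i_\nu}-x_{j_\nu})-K\star\rho(x_{i_\nu})\}\,f(x_{j_\nu},v_{j_\nu})\,\ud z_{j_\nu}$. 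Integrating out $v_{j_\nu}$ turns $f(x_{j_\nu},v_{j_\nu})$ into $\rho(x_{j_\nu})$, and then $\int_\Omega K(x_{i_\nu}-x_{j_\nu})\rho(x_{j_\nu})\,\ud x_{j_\nu}=K\star\rho(x_{i_\nu})$ while $K\star\rho(x_{i_\nu})\int_\Omega\rho\,\ud x_{j_\nu}=K\star\rho(x_{i_\nu})$, so the bracket integrates to zero. This is precisely the convolution cancellation already used in Lemma \ref{vanish2}.

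The only genuine obstacle is the rigorous justification of these formal manipulations: applying Fubini's theorem to reorder the integrations, and, in Case 1, discarding the boundary term in the integration by parts so that $\int_{\R^d}\nabla_{v_{i_\nu}}f\,\ud v_{i_\nu}=0$ really holds. Both are handled by the truncation procedure already introduced at the end of Lemma \ref{vanish1}: one inserts a cutoff $\varphi_L$, performs the (now elementary) integrations on the truncated integrand, and passes to the limit $L\to\infty$, the passage being controlled by $K\in L^\infty$ together with the integrability of $\nabla_v\log f$ against $f$ guaranteed by $\sup_p M_p/p<\infty$. Everything else is bookkeeping of indices, needed only to verify that conditions 1) and 2) are exactly what is required to isolate $v_{i_\nu}$, respectively $z_{j_\nu}$, in a form where one of the two elementary cancellations applies.
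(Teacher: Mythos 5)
Your proof is correct and follows essentially the same route as the paper: in case 1) you isolate the single velocity variable $v_{i_\nu}$, rewrite $f\,\nabla_{v_{i_\nu}}\log f=\nabla_{v_{i_\nu}}f$ and integrate by parts, and in case 2) you isolate $z_{j_\nu}$ and use the convolution identity $\int\{K(x_{i_\nu}-x_{j_\nu})-K\star\rho(x_{i_\nu})\}f(x_{j_\nu},v_{j_\nu})\,\ud z_{j_\nu}=0$, which is exactly the paper's argument. Your additional remarks on justifying Fubini and the vanishing boundary terms via the cutoff $\varphi_L$ are consistent with how the paper treats these points in Lemma \ref{vanish1}.
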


\begin{proof} Let us  first check the case 1) above. Without loss of generality,  we can assume $i_v=i_1=1$ while  $i_2 \ne 1, \cdots, i_p \ne 1$. Now use  the conventions $F_i$ and $k_{i,j}$ to  simplify  notations. Hence the integral becomes
\begin{equation*}
\begin{split}
& \int_{(\Omega\times \mathbb{R}^d)^N } \left(\tilde{f} \cdot k_{1,j_1} \right)  \cdot \left(F_{i_2} \cdot k_{i_2, j_2}\right) \cdots \left(F_{i_p} \cdot k_{i_p, j_p}\right)\bar{f}_N \ud Z\\
=& \int \nabla_{v_1} f(x_1, v_1)\cdot k_{1,j_1}  \left(F_{i_2} \cdot k_{i_2, j_2}\right) \cdots \left(F_{i_p} \cdot k_{i_p, j_p}\right) \ \Pi_{i=2}^N f(x_i, v_i) \ud Z ,
\end{split}
\end{equation*}
where the only term depending on $v_1$ is $f(x_1, v_1)$. Integration by parts shows that (\ref{integral_indicate}) holds. 

In the second case,  without loss of generality, we can assume that $j_1=1$, while $j_2 \ne 1, \cdots,  j_p \ne 1$ and $i_1 \ne 1, \cdots, i_p \ne 1$. Hence the integral becomes 
\begin{equation*}
\begin{split}
&\int F_{i_1}\cdot \{K(x_{i_1}-x_1)-K*\rho(x_{i_1})\} \left(F_{i_2} \cdot k_{i_2, j_2}\right) \cdots \left(F_{i_p} \cdot k_{i_p, j_p}\right)\bar{f}_N \ud Z\\
=& \int_{(\Omega \times \mathbb{R}^d)^{N-1}}\left(F_{i_2} \cdot k_{i_2, j_2}\right) \cdots \left(F_{i_p} \cdot k_{i_p, j_p}\right) \Pi_{i=2}^N f(x_i, v_i)\ud z_2 \cdots \ud z_{N}\\
 & \cdot \int_{\Omega \times \mathbb{R}^d} \nabla_{v_{i_1}} \log f(x_{i_1}, v_{i_1}) \cdot \{K(x_{i_1}-x_1)-K*\rho(x_{i_1})\}f(x_1, v_1) \ud x_1\ud v_1\\
=&\int_{(\Omega \times \mathbb{R}^d)^{N-1}}\left(F_{i_2} \cdot k_{i_2, j_2}\right) \cdots \left(F_{i_p} \cdot k_{i_p, j_p}\right) \Pi_{i=2}^N f(x_i, v_i)\ud z_2 \cdots \ud z_{N}\\
& \cdot \left( \nabla_{v_{i_1}} \log f(x_{i_1}, v_{i_1}) \cdot \int_{\Omega \times \mathbb{R}^d}  \{K(x_{i_1}-x_1)-K*\rho(x_{i_1})\}f(x_1, v_1) \ud x_1\ud v_1 \right),\\
\end{split}
\end{equation*}
where only $K(x_{i_1} -x_1)$ and $f(x_1,v_1)$ are $(x_1, v_1)$-dependent. As in Lemma \ref{vanish2}
\[
\int_{\Omega \times \mathbb{R}^d}  \{K(x_{i_1}-x_1)-K*\rho(x_{i_1})\}f(x_1, v_1) \ud x_1\ud v_1=0,
\]
and hence again (\ref{integral_indicate})  holds, completing the proof. 

\end{proof}

To make easier use of Lemma \ref{generalcancelationrule}, we introduce some definitions, formalizing the set of indices over which the expansion of $R_N$ does not vanish. 
\paragraph{Definitions} In this subsection, we always assume that $3k \leq N$. Recall that we write $I_{p}=(i_1, \cdots, i_{p})$ and $J_{p}=(j_1, \cdots, j_{p})$.  For positive integers $q$ and $p$, 
\begin{itemize}
\item the overall set $\mathcal{T}_{q, p}$ is defined  as 
\[
\mathcal{T}_{q,p}=\{I_p=(i_1, \cdots, i_p)\vert 1\leq i_\nu \leq q, \text{for all\ } 1\leq \nu \leq p \}.
\]
\end{itemize}

Then we define 
\begin{itemize}
\item the multiplicity function $\Phi_{q,p}:\mathcal{T}_{q,p} \to \{0, 1, \cdots, p\}^q$, with $\Phi_{q,p}(I_p)=A_q$,  where $A_q=(a_1, a_2, \cdots, a_q)$ and  $a_l= \vert \{1 \leq \nu \leq p\vert i_\nu=l\}\vert$.
\end{itemize}
With the multiplicity function $\Phi_{q,p}$, we can proceed to define
\begin{itemize}
\item the ``effective set" $\mathcal{E}_{q,p}$ of index $I_p$ as   
\end{itemize} 
\[ 
\mathcal{E}_{q, p}=\{ I_p \in \mathcal{T}_{q,p}\vert\  \Phi_{q,p}(I_p)=A_q=(a_1, \cdots, a_q) \text{ with } a_\nu \ne 1 \text{ for any } 1 \leq \nu \leq q\} .
\]
We can restate case 1) in  Lemma \ref{L3} by using the notation $\mathcal{E}_{N, 2k}$.  That is 
\begin{itemize}
\item  If $I_{2k} \notin \mathcal{E}_{N,2k}$, then 
 \begin{equation}
 \label{fact1}
 \int (F_{i_1}\cdot k_{i_1, j_1}) \cdots (F_{i_{2k}}\cdot k_{i_{2k}, j_{2k}})\bar{f}_N \ud Z=0.
\end{equation}
\end{itemize}
However, even for an  $I_{2k} \in \mathcal{E}_{N,2k}$ , the integral 
\begin{equation*}
\int (F_{i_1}\cdot k_{i_1, j_1}) \cdots (F_{i_{2k}}\cdot k_{i_{2k}, j_{2k}})\bar{f}_N \ud Z
\end{equation*}
can still vanish for some choices of $J_{2k}\text{\ in\ } \mathcal{T}_{N,2k}$ according to the case 2) in Lemma \ref{L3}. Hence, for $I_{2k} \in \mathcal{E}_{N, 2k}$, we  define 
\begin{itemize}
\item the  ``effective set"  $\mathcal{P}_{N,2k}^{I_{2k}}$ of $J_{2k}$ as 
\end{itemize}  
\[
\mathcal{P}_{N,2k}^{I_{2k}}:=\left\{ J_{2k} \in \mathcal{T}_{N, 2k}  \Big\arrowvert \begin{aligned} &  \text{either  for all\ }   1\leq  \nu \leq 2k, j_\nu \in \{i_1, \cdots, i_{2k}\} ; \\ & \text{or for any } \nu  \text{\ such that }j_\nu \notin \{i_1, \cdots, i_{2k}\}, \\&
 \exists \nu' \ne \nu, \text{\ such that\ } j_\nu=j_{\nu'}.  \end{aligned} \right\}
\]
Then  the case 2) in Lemma (\ref{L3}) can be represented as 
\begin{itemize}
\item If $I_{2k} \in \mathcal{E}_{N,2k}$ and $J_{2k} \notin \mathcal{P}_{N,2k}^{I_{2k}}$, then 
\begin{equation}
\label{fact2}
\int (F_{i_1}\cdot k_{i_1, j_1}) \cdots (F_{i_{2k}}\cdot k_{i_{2k}, j_{2k}})\bar{f}_N \ud Z=0.
\end{equation}

\end{itemize}
To simplify the notations in the following proofs, we also define  
\begin{itemize}
\item the set of all components of $I_p =(i_1, i_2, \cdots, i_p)\in \mathcal{T}_{q, p}$ as
\[S(I_p)=\{i_1, i_2, \cdots, i_p \}. \]
\end{itemize}
The set $S(I_p)$ only captures distinct integers in $I_p$. Hence, the cardinality of $S(I_p)$ equals the number of distinct integers  in $I_p$.

\bigskip

We start by bounding $|\mathcal{E}_{q, p}|$
\begin{lemma}
\label{count I}
 Assume that $1 \leq p \leq q$.   Then 
\begin{equation}
\label{boundQp}
 |\mathcal{E}_{q, p}|\leq \sum_{l=1}^{\lfloor \frac{p}{2}\rfloor} \binom{q}{l}l^{p} \leq \lfloor \frac{p}{2}\rfloor \binom{q}{\lfloor \frac{p}{2}\rfloor} \left( \lfloor \frac{p}{2}\rfloor\right)^{p} \leq \frac{p}{2} e^{\frac{p}{2}} q^{\frac{p}{2}} \left( \frac{p}{2}\right)^{\frac{p}{2}}.
\end{equation}  
\end{lemma}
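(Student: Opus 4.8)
The plan is to decompose $\mathcal{E}_{q,p}$ according to the number of distinct indices that actually occur in $I_p$, and then bound each piece by an elementary counting argument. Write $l = |S(I_p)|$ for the number of distinct components of $I_p$. The defining condition of $\mathcal{E}_{q,p}$ — that every multiplicity entry of $\Phi_{q,p}(I_p)$ equals $0$ or is at least $2$ — forces each occurring index to appear at least twice among the $p$ slots, so $2l \le p$, i.e. $1 \le l \le \lfloor p/2\rfloor$. This immediately suggests the partition $\mathcal{E}_{q,p}=\bigsqcup_{l=1}^{\lfloor p/2\rfloor}\{I_p\in\mathcal{E}_{q,p}:|S(I_p)|=l\}$.

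First I would bound the $l$-th piece. For fixed $l$ there are $\binom q l$ ways to choose the set $S(I_p)$ of occurring values, and then each of the $p$ entries of $I_p$ must be filled from these $l$ values, which can be done in at most $l^p$ ways (this overcounts, since we do not insist that all $l$ values are used and that each appears at least twice, but we only need an upper bound). Summing over $l$ yields the first inequality $|\mathcal{E}_{q,p}| \le \sum_{l=1}^{\lfloor p/2\rfloor}\binom q l l^p$.

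For the second inequality I would show that the summand $g(l) = \binom q l l^p$ is nondecreasing in $l$ over $1\le l\le\lfloor p/2\rfloor$, so that the sum of the $\lfloor p/2\rfloor$ terms is at most $\lfloor p/2\rfloor$ times the largest one, $g(\lfloor p/2\rfloor)$. The monotonicity follows from the ratio $\frac{g(l+1)}{g(l)}=\frac{q-l}{l+1}\bigl(1+\tfrac1l\bigr)^p$: using the hypothesis $p\le q$ one checks that $q\ge 2l+1$ on the relevant range, so $\frac{q-l}{l+1}\ge 1$, while Bernoulli's inequality gives $(1+\tfrac1l)^p\ge 1+p/l\ge 3$ because $l\le p/2$; hence the ratio exceeds $1$.

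The last inequality is the Stirling-type estimate, which I expect to be the most delicate step because of the interplay between the floor function and the parity of $p$. I would use $\binom q{m}\le q^{m}/m!$ together with $m!\ge (m/e)^{m}$ for $m=\lfloor p/2\rfloor$, turning $\lfloor p/2\rfloor\binom q{\lfloor p/2\rfloor}(\lfloor p/2\rfloor)^p$ into $\lfloor p/2\rfloor\,(eq)^{\lfloor p/2\rfloor}\lfloor p/2\rfloor^{\lceil p/2\rceil}$. For even $p$ this collects exactly into $\tfrac p2 e^{p/2}q^{p/2}(p/2)^{p/2}$; for odd $p$ the naive factor-by-factor bound $m^{\lceil p/2\rceil}\le (p/2)^{p/2}$ just fails for large $p$, so I would instead exploit the spare power $q^{1/2}$ (available because only $q^{(p-1)/2}$ appears on the left while $q^{p/2}$ sits on the right) together with the gain $(1-1/p)^{p/2}\le e^{-1/2}$, using $q\ge p$ to absorb the leftover $\sqrt{p-1}$. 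The main obstacle is thus not the combinatorics but this final parity-sensitive comparison, where one must resist bounding each factor in isolation and instead carry the surplus $q$-power through.
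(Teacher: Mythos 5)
Your proof is correct and follows the same combinatorial skeleton as the paper: partition $\mathcal{E}_{q,p}$ by $l=|S(I_p)|$, observe that the multiplicity condition forces $l\le\lfloor p/2\rfloor$, and bound each piece by $\binom{q}{l}l^p$ (the paper reaches $l^p$ via the multinomial sum $\sum p!/(a_1!\cdots a_l!)$, you go there directly; same estimate). For the second inequality the paper simply bounds each factor termwise, $\binom{q}{l}\le\binom{q}{\lfloor p/2\rfloor}$ and $l^p\le\lfloor p/2\rfloor^p$, whereas you prove monotonicity of $g(l)=\binom{q}{l}l^p$ via the ratio $\frac{q-l}{l+1}(1+1/l)^p$; your argument is valid (indeed $q\ge p> 2l+1-1$ on the relevant range) but slightly more work than needed. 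The only genuine divergence is the final Stirling-type inequality: the paper invokes the quantitative Stirling formula with explicit constants $\lambda\in(1,1.1)$, which yields the sharper bound $k\binom{q}{k}k^{2k}\le\sqrt{k}\,e^kq^kk^k$, and the spare $\sqrt{k}$ (relative to the prefactor $k$) is what absorbs the extra factor of $k$ in the odd-$p$ case. You instead use the elementary $\binom{q}{m}\le(eq/m)^m$ from $m!\ge(m/e)^m$, which loses that $\sqrt{k}$, and you correctly compensate by spending the surplus $q^{1/2}$ (since only $q^{(p-1)/2}$ is used on the left while $q^{p/2}$ is available on the right), using $k\le q$. Both resolutions of the odd-$p$ bookkeeping are sound; yours avoids Stirling constants entirely, at the price of having to notice where the missing half-power must come from.
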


\begin{proof} 
Pick any multi-index $I_p=(i_1, \cdots, i_{p}) \in \mathcal{E}_{q,p}$ and recall  that $S(I_p)=\{i_1, \cdots, i_p\}$. The fact that $I_p \in \mathcal{E}_{q,p}$ implies that the multiplicity of each integer cannot be one. Hence $1 \leq \vert S(I_p)\vert \leq \lfloor \frac{p}{2} \rfloor$.  Indeed, if $|S(I_p)| \geq \lfloor \frac{p}{2} \rfloor +1$, then 
\[ 
p \geq 2 \left(\lfloor \frac{p}{2} \rfloor +1  \right) >2 \left( \frac{p}{2}-1 +1\right)=p,
\]
which is impossible.  

If $p=1$, then $\mathcal{E}_{q,p}=\emptyset$. The estimate (\ref{boundQp}) holds trivially. In the following we assume that $p \geq 2$. We proceed by discussing the cardinality of $S(I_p)$. 

Denote  $|S(I_p)|=l$, where $1 \leq l \leq \lfloor \frac{p}{2} \rfloor$. We count step by step
\[
|\{I_p \in \mathcal{E}_{q, p}\vert |S(I_p)|=l \}|
\]  

\smallskip

{\em Step I:} Choose $l$ distinct integers out of $\{1, 2, \cdots, q\}$. We have $\binom{q}{l}$  choices in this step. Without loss of generality, in the following,  we  assume these $l$ integers are  $1, 2, \cdots, l$, i,e. $S(I_p)=\{1, 2, \cdots, l\}$.

\smallskip

{\em Step II:}  For an $I_p \in \mathcal{E}_{q, p}$ with $S(I_p)=\{1, 2, \cdots, l\}$, recall that the multiplicity function reads
\[
\Phi_{q,p}(I_p)=(a_1, \cdots, a_l, \underbrace{0, \cdots, 0}_{q-l \text{\ times}}).
\]
 For a fixed $l-$tuple $(a_1, a_2, \cdots, a_l)$ with $a_1+a_2+\ldots+a_l=p$, we must choose $I_p=(i_1,\ldots,i_p)$ s.t. all $i_k\in \{1,\ldots,l\}$ and the number $m\in \{1,\ldots,l\}$ is chosen exactly $a_m$ times, which means calculating 
\[
|\{I_p \in \mathcal{E}_{q,p} \vert \  \Phi_{q,p}(I_p)=(a_1, \cdots, a_l,0, \cdots, 0) \}|.
\]
We may choose $a_1$ times the number $1$ among all possible $p$ positions, then $a_2$ times the number $2$ among all remaining $p-a_2$ positions and so on. The total number of choices is  $\frac{p!}{(a_1)! \cdots (a_l)!}$, that is
\begin{equation*}
|\{I_p \in \mathcal{E}_{q,p} \vert \  \Phi_{q,p}(I_p)=(a_1, \cdots, a_l,0, \cdots, 0) \}|=\frac{p!}{(a_1)! \cdots (a_l)!}.
\end{equation*}
 
\smallskip

{\em Step III:} The definition of $\mathcal{E}_{q,p}$ implies that in Step II, 
\[
a_1 + a_2 +\cdots +a_l=p, \text{\  and } a_\nu \geq 2 \text{\ for any } 1 \leq \nu \leq l.
\]
Hence, Step II and Step III gives that 
\[
W_{q,p}^l:=\vert \left\{I_p \in \mathcal{E}_{q, p} \vert \ S(I_p)=\{1, 2,\cdots, l\} \right\}\vert =\sum_{ \substack{ a_1+\cdots + a_l=p,\\a_1 \geq 2,\cdots a_l \geq 2}} \frac{p!}{(a_1)! \cdots (a_l)!},
\]
which is bounded by 
\[
 \sum_{ \substack{a_1+\cdots + a_l=p,\\a_1 \geq 0,\cdots a_l \geq 0}} \frac{p!}{(a_1)! \cdots (a_l)!} = (\underbrace{1+ \cdots + 1 }_{l  \text{ times} })^p=  l^{p}.
\]
Combining all those three steps together, we have that
\[
|\mathcal{E}_{q, p}| = \sum_{l=1}^{\lfloor \frac{p}{2} \rfloor} \vert \{I_p \in \mathcal{E}_{q, p} \vert \ |S(I_p)|=l \}\vert 
= \sum_{l=1}^{\lfloor \frac{p}{2} \rfloor} \binom{q}{l} W_{q,p}^l  \leq \sum_{l=1}^{\lfloor \frac{p}{2} \rfloor} \binom{q}{l} l^p,
\]
where the fact that $p \leq q $ implies  $1 \leq l \leq \lfloor\frac{p}{2}\rfloor \leq \lfloor\frac{q}{2}\rfloor $. Hence,  for $1 \leq l \leq \lfloor\frac{p}{2}\rfloor$, we have  \[
\binom{q}{l} \leq \binom{q}{\lfloor\frac{p}{2}\rfloor},
\] 
leading to, 
\[
|\mathcal{E}_{q, p}|\leq \sum_{l=1}^{\lfloor\frac{p}{2}\rfloor} \binom{q}{l}l^{p}\leq \lfloor\frac{p}{2}\rfloor \binom{q}{\lfloor\frac{p}{2}\rfloor} \left(\lfloor\frac{p}{2}\rfloor \right)^{p}.
\]
The last inequality in (\ref{boundQp}) is now ensured by Stirling's formula. Indeed, write $\lfloor\frac{p}{2}\rfloor=k$,  then Stirling's formula gives 
\[
\binom{q}{k} =\frac{q!}{(q-k)!k!} =\frac{\lambda_q \sqrt{2\pi q} \left(\frac{q}{e}\right)^q}{\lambda_{q-k} \sqrt{2\pi (q-k)} \left(\frac{q-k}{e}\right)^{q-k}\lambda_k \sqrt{2\pi k} \left(\frac{k}{e}\right)^k},
\]
where $\lambda_{q}$, $\lambda_{q-k}$ and $\lambda_{k}$ all lie in $(1, 1.1)$. Hence, 
\begin{equation}
\label{evenbound}
\lfloor\frac{p}{2}\rfloor \binom{q}{\lfloor\frac{p}{2}\rfloor} \left(\lfloor\frac{p}{2}\rfloor \right)^{p} =k\binom{q}{k}k^{2k} \leq \frac{1.1}{\sqrt{2\pi}} \sqrt{k} \sqrt{\frac{q}{q-k}} \left(\frac{q}{q-k}\right)^{q-k} q^k k^k.
\end{equation}
Since 
\begin{equation}
\label{exponetial}
\left(\frac{q}{q-k}\right)^{q-k}= \left( \left(1+ \frac{1}{\frac{q-k}{k}}\right)^{\frac{q-k}{k}}\right)^k \leq e^k,
\end{equation}
and  for any $1 \leq k \leq \frac{q}{2}$, 
\begin{equation*}
\sqrt{\frac{q}{q-k}}  \leq \sqrt{2},
\end{equation*}
we have for $p$ even, 
\begin{equation}
\label{evenQp}
\lfloor\frac{p}{2}\rfloor \binom{q}{\lfloor\frac{p}{2}\rfloor} \left(\lfloor\frac{p}{2}\rfloor \right)^{p} \leq \sqrt{k}e^kq^kk^k =\sqrt{\frac{p}{2}} e^{\frac{p}{2}} q^{\frac{p}{2}} \left( \frac{p}{2}\right)^{\frac{p}{2}} \leq \frac{p}{2} e^{\frac{p}{2}} q^{\frac{p}{2}} \left( \frac{p}{2}\right)^{\frac{p}{2}}.
\end{equation}
For $p$ odd, write instead $p=2k+1 \geq 3$, then by (\ref{evenQp}), we have that
\[\begin{split}
\lfloor\frac{p}{2}\rfloor \binom{q}{\lfloor\frac{p}{2}\rfloor} \left(\lfloor\frac{p}{2}\rfloor \right)^{p} &=k \left( k \binom{q}{k} k^{2k} \right) \leq k \left(\sqrt{k}e^kq^k k^k  \right)=k e^k q^k k^{\frac{p}{2}} \\
&\leq \frac{p}{2} e^{\frac{p}{2}} q^{\frac{p}{2}} \left( \frac{p}{2}\right)^{\frac{p}{2}}. 
\end{split}
\]
This finishes the proof of Lemma \ref{count I}. 
\end{proof}

\bigskip

We now turn to bounding the number of  choices of $J_{2k}$ in $\mathcal{P}_{N, 2k}^{I_{2k}}$ with $I_{2k} \in \mathcal{E}_{N, 2k}$. 
\begin{lemma}
\label{count J} (Choices of the multi-indices $J_{2k}$) Assume that $3k \leq N$ and $I_{2k} \in \mathcal{E}_{N,2k}$ with $|S(I_{2k})|=l$. Recall that $1\leq l \leq k$. Then we have 
\begin{equation}
\label{BJ}
|\mathcal{P}_{N, 2k}^{I_{2k}}|= l^{2k}+ \sum_{h=2}^{2k} l^{2k-h} \binom{2k}{h} |\mathcal{E}_{N-l, h}|.
\end{equation}
Furthermore, 
\begin{equation}
\label{CNK}
|\mathcal{P}_{N, 2k}^{I_{2k}}|  \leq P_{N,2k} := 2k e^k 2^{2k} k^k N^k.
\end{equation}
\end{lemma}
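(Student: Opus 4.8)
The plan is to prove the two assertions separately: first the exact identity \eqref{BJ} by a direct partition of the positions of $J_{2k}$, and then the bound \eqref{CNK} by feeding Lemma \ref{count I} into that identity and resumming a binomial. Throughout I keep $I_{2k}\in\mathcal{E}_{N,2k}$ fixed with $|S(I_{2k})|=l$, so that $1\leq l\leq k$.

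For the identity, I would classify every $J_{2k}\in\mathcal{P}_{N,2k}^{I_{2k}}$ according to the set $B\subseteq\{1,\dots,2k\}$ of positions $\nu$ for which $j_\nu\notin S(I_{2k})$, and write $h=|B|$. The positions outside $B$ (the ``inside'' positions) may take any value in $S(I_{2k})$, giving $l^{2k-h}$ choices, while the positions in $B$ take values in the complement $\{1,\dots,N\}\setminus S(I_{2k})$, a set of $N-l$ integers. The defining constraint of $\mathcal{P}_{N,2k}^{I_{2k}}$ says precisely that among the positions of $B$ no value may occur with multiplicity one; after relabelling the complement as $\{1,\dots,N-l\}$ this is exactly the condition defining $\mathcal{E}_{N-l,h}$. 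Since there are $\binom{2k}{h}$ choices of $B$, the case $h=1$ is impossible ($\mathcal{E}_{N-l,1}=\emptyset$), and the case $h=0$ (all positions inside) contributes $l^{2k}$, summing over $h$ yields \eqref{BJ}.

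For the bound, the key step is to turn the sum into a binomial. Using the last inequality of Lemma \ref{count I} together with $N-l\leq N$ and $h/2\leq k$,
\[
|\mathcal{E}_{N-l,h}|\leq \frac{h}{2}\,e^{h/2}(N-l)^{h/2}\Big(\frac{h}{2}\Big)^{h/2}\leq k\,(eNk)^{h/2},
\]
which is legitimate since $h\leq 2k\leq N-l$ (as $3k\leq N$ and $l\leq k$ give $N-l\geq 2k$), so the hypothesis $p\leq q$ of Lemma \ref{count I} holds. Setting $b=\sqrt{eNk}$ and inserting this into \eqref{BJ},
\[
|\mathcal{P}_{N,2k}^{I_{2k}}|\leq l^{2k}+k\sum_{h=2}^{2k}\binom{2k}{h}l^{2k-h}b^{h}\leq (1+k)\sum_{h=0}^{2k}\binom{2k}{h}l^{2k-h}b^{h}=(1+k)\,(l+b)^{2k}.
\]
It then remains to check $l+b\leq 2b$, i.e. $l\leq b$; this follows from $l\leq k\leq\sqrt{eNk}=b$ (equivalently $k\leq eN$, which holds because $N\geq 3k$). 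Hence $(l+b)^{2k}\leq (2b)^{2k}=2^{2k}(eNk)^{k}=2^{2k}e^{k}k^{k}N^{k}$, and with $1+k\leq 2k$ one obtains $|\mathcal{P}_{N,2k}^{I_{2k}}|\leq 2k\,e^{k}2^{2k}k^{k}N^{k}=P_{N,2k}$.

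The main obstacle is getting the decomposition \eqref{BJ} exactly right: one must recognize that the ``no multiplicity one outside $S(I_{2k})$'' condition is literally the definition of $\mathcal{E}_{N-l,h}$ on the relabelled complement, and correctly handle the boundary cases $h=0$ and $h=1$. Once the clean bound $|\mathcal{E}_{N-l,h}|\leq k(eNk)^{h/2}$ and the observation $l\leq\sqrt{eNk}$ are in hand, the remaining binomial resummation is routine.
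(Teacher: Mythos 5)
Your proof is correct and follows essentially the same route as the paper: the identity \eqref{BJ} is obtained by the same partition according to the number $h$ of positions of $J_{2k}$ taking values outside $S(I_{2k})$ (with the relabelled complement matching the definition of $\mathcal{E}_{N-l,h}$), and the bound \eqref{CNK} is derived by inserting Lemma \ref{count I} and resumming the binomial to get $(l+\sqrt{eNk})^{2k}$, just as the paper resums to $(l+\sqrt{k(N-l)})^{2k}$ after pulling $e^{h/2}\leq e^k$ out front. The only differences are the harmless regrouping of the factor $e^{h/2}$ and your explicit verification of the hypotheses $h\leq N-l$ and $l\leq\sqrt{eNk}$, which the paper leaves implicit.
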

\begin{proof}
Without loss of generality, we assume that $S(I_{2k})=\{1, 2, \cdots, l\}$. By the definition of the set $\mathcal{P}_{N,2k}^{I_{2k}}$, we have two cases. The first case is that  all $j_{\nu}$ lie in the set $S(I_{2k})=\{1, 2, \cdots, l\}$. The total number of such $J_{2k}$ is  $l^{2k}$ since each $j_\nu$ can be any integer from $1$ to $l$. 

In the second case, there exists some  $j_{\nu}$ in $\{l+1, \cdots, N\}$ and for each such $j_\nu \geq l+1$, there exists  $\nu'\ne \nu$ such that $j_\nu = j_{{\nu'}}$. That is to say,   each component  $j_\nu \geq l+1$ is repeated.  Denote by
 \[
h=|\{1 \leq \nu \leq 2k \vert j_\nu \geq l+1\}|
\] 
the number of components of $J_{2k}$ which are larger than $l$.  We thus have $2 \leq h \leq 2k$. 

For a fixed $h$,   we need to choose $h$ positions in $J_{2k}$  to put integers bigger than $l$ for $\binom{2k}{h}$ choices.

The remaining $(2k-h)$ positions of $J_{2k}$ can be filled with any integer in $\{1, 2, \cdots, l\}$,  for $l^{2k-h}$ choices. 

Finally, we choose $h$ integers from the set $\{l+1, \cdots, N\}$ for each of the $h$ positions in $J_{2k}$ that we chose initially. Again, the multiplicity for each integer chosen is at least two and the order is taken into account. This coincides with the definition of $\mathcal{E}_{N-l,h}$.  Hence, in this step, the total number is just $|\mathcal{E}_{N-l, h}|$. 

Therefore for a fixed $h$, one has that 
\[
|\{J_{2k} \in \mathcal{P}_{N, 2k}^{I_{2k}} \vert h \text{\ components of \ } J_{2k} \text{\ are larger than\ } l\}|=\binom{2k}{h} l^{2k-h}|\mathcal{E}_{N-l, h}|.
\]
Adding all the cases together, we obtain
\[\begin{split}
|\mathcal{P}_{N, 2k}^{I_{2k}}|&=l^{2k}+\sum_{h=2}^{2k}|\{J_{2k} \in \mathcal{P}_{N, 2k}^{I_{2k}} \vert h \text{\ components of \ } J_{2k} \text{\ are larger than\ } l\}|\\
&=l^{2k}+\sum_{h=2}^{2k} \binom{2k}{h} l^{2k-h}|\mathcal{E}_{N-l, h}|,
\end{split}\]
which is exactly (\ref{BJ}). 

Now we simplify the bound for $|\mathcal{P}_{N, 2k}^{I_{2k}}|$. 
Applying Lemma \ref{count I}, we have 
\[
|\mathcal{E}_{N-l, h}| \leq \frac{h}{2} e^{\frac{h}{2}}(N-l)^{\frac{h}{2}} \left( \frac{h}{2}\right)^{\frac{h}{2}}. 
\]
Therefore
\begin{equation*}
\begin{split}
 |\mathcal{P}_{N, 2k}^{I_{2k}}|  &\leq   l^{2k}+ \sum_{h=2}^{2k} l^{2k-h} \binom{2k}{h}  h e^{\frac{h}{2}}(N-l)^{\frac{h}{2}} \left( \frac{h}{2}\right)^{\frac{h}{2}} \\ & \leq  l^{2k}+ 2k e^k \sum_{h=2}^{2k} l^{2k-h} \binom{2k}{h} (N-l)^{\frac{h}{2}} k^{\frac{h}{2}} \\
 & \leq  2k e^k \ \left\{ \sum_{h=0}^{2k}  \binom{2k}{h}l^{2k-h} (N-l)^{\frac{h}{2}} k^{\frac{h}{2}}\right\} =2k e^k \ \left( l+\sqrt{k(N-l)}\right)^{2k} \\
 & \leq 2k e^k 2^{2k} k^k N^k. 
\end{split}
\end{equation*}
This completes the proof.

\end{proof}

\bigskip

We are now ready to prove Prop. \ref{ksmall} by  combining Lemma \ref{count I} and Lemma \ref{count J}. 
\begin{proof}[Proof of Prop. \ref{ksmall}]

By the definition of $\mathcal{T}_{q,p}$ for $q=N$ and $p=2k$,   we have by expanding $R_N$ as defined in \eqref{RN}
\[
\begin{split}
&\int |R_N|^{2k} \bar{f}_N \ud Z\\
=&\frac{1}{N^{2k}}\int \sum_{1 \leq i_1, j_1 \leq N} \cdots \sum_{1 \leq i_{2k}, j_{2k} \leq N} (F_{i_1}\cdot k_{i_1, j_1}) \cdots (F_{i_{2k}}\cdot k_{i_{2k}, j_{2k}})\,\bar{f}_N\, \ud Z  \\
=&\frac{1}{N^{2k}} \sum_{I_{2k} \in \mathcal{T}_{N,2k}} \sum_{J_{2k} \in \mathcal{T}_{N,2k}}\int (F_{i_1}\cdot k_{i_1, j_1}) \cdots (F_{i_{2k}}\cdot k_{i_{2k}, j_{2k}})\,\bar{f}_N\, \ud Z.
\end{split}
\]
Applying  Lemma \ref{generalcancelationrule} ( i.e.  facts (\ref{fact1}) and (\ref{fact2})), the previous equality becomes 
\begin{equation}
\label{Summation2}
\int |R_N|^{2k} \bar{f}_N \ud Z=\frac{1}{N^{2k}} \sum_{I_{2k} \in \mathcal{E}_{N,2k}} \sum_{J_{2k} \in \mathcal{P}_{N,2k}^{I_{2k}}}\int (F_{i_1}\cdot k_{i_1, j_1}) \cdots (F_{i_{2k}}\cdot k_{i_{2k}, j_{2k}})\,\bar{f}_N\, \ud Z.
\end{equation} 
For fixed indices $I_{2k} \in \mathcal{E}_{N,2k}$ and $J_{2k} \in \mathcal{P}_{N,2k}^{I_{2k}}$, we have 
\[
\begin{split}
& \int (F_{i_1}\cdot k_{i_1, j_1}) \cdots (F_{i_{2k}}\cdot k_{i_{2k}, j_{2k}})\bar{f}_N \ud Z \\
\leq & \left(2\|K\|_{L^\infty} \right)^{2k}\int \left\vert\nabla_{v_1}f(x_1, v_1) \right\vert^{a_1} \cdots \left\vert\nabla_{v_N}f(x_N, v_N) \right\vert^{a_N} \bar{f}_N \ud Z,
\end{split}
\]
where we recall that $a_\nu$ is the multiplicity of integer $\nu$ in the multi-index $I_{2k}$, i.e. $\Phi_{N,2k}(I_{2k})=A_N=(a_1, \cdots, a_N)$. On the other hand
\[
\begin{split}
&\int \left\vert\nabla_{v_1}f(x_1, v_1) \right\vert^{a_1} \cdots \left\vert\nabla_{v_N}f(x_N, v_N) \right\vert^{a_N}\, \bar{f}_N \ud Z\\ = \quad &M_{a_1}^{a_1} M_{a_2}^{a_2}  \cdots M_{a_N}^{a_N} 
\leq \left( \sup_p \frac{M_p}{p}\right)^{2k} a_1^{a_1}\cdots a_N^{a_N},
\end{split}
\]
with the convention that $0^0=1$. 

Hence, combining (\ref{Summation2}) with the previous inequalities and the second part of Lemma \ref{count J}, we have that
\begin{equation}
\label{PEquation1}
\begin{split}
&\frac{1}{(2k)!}\int |R_N|^{2k}\, \bar{f}_N \\=&\frac{1}{(2k)!}\frac{1}{N^{2k}} \sum_{l=1 }^k  \sum_{I_{2k} \in \mathcal{E}_{N,2k},\  |S(I_{2k})|=l}  \sum_{J_{2k} \in \mathcal{P}_{N,2k}^{I_{2k}}}\int (F_{i_1}\cdot k_{i_1, j_1}) \cdots (F_{i_{2k}}\cdot k_{i_{2k}, j_{2k}})\,\bar{f}_N \\
\leq & \frac{1}{(2k)!}\frac{1}{N^{2k}} \sum_{l=1 }^k \ \sum_{I_{2k} \in \mathcal{E}_{N,2k},\  |S(I_{2k})|=l} P_{N, 2k} \left( 2 \|K\|_{L^\infty} \left( \sup_p \frac{M_p}{p}\right)\right)^{2k} a_1^{a_1}\cdots a_N^{a_N},
\end{split}
\end{equation}
where we recall that $P_{N,2k}=2\,k\,e^k\,2^{2k}\,k^k\,N^k$ which is the bound obtained on $|\mathcal{P}_{N,2k}^{I_{2k}}|$ in Lemma \ref{count J}.
  
Observe that for a given $l$ and given multiplicities $a_1,\ldots, a_l$, the number of $I_{2k} \in \mathcal{E}_{N,2k}$ with such multiplicities is bounded by
\[
\frac{(2k)!}{(a_1)! \cdots (a_l)!}.
\] 
This is the argument in Lemma \ref{count I}, just by choosing first $a_1$ times the number $1$ among all $2k$ positions, then $a_2$ times the number $2$ among the remaining $2k-a_1$ and so on.

Thus
\[
\sum_{l=1 }^k \ \sum_{I_{2k} \in \mathcal{E}_{N,2k},\  |S(I_{2k})|=l} a_1^{a_1}\cdots a_N^{a_N}=\sum_{l=1}^k \binom{N}{l}U_{N,2k}^l,
\]
where 
\begin{equation*}
U_{N, 2k}^l:=\sum_{\begin{aligned}&a_1+\cdots + a_l=2k,\\&a_1 \geq 2,\cdots a_l \geq 2\end{aligned}} \frac{(2k)!}{(a_1)! \cdots (a_l)!} a_1^{a_1}\cdots a_l^{a_l}. 
\end{equation*}  
Combining this estimate with (\ref{PEquation1}), we get 
\begin{equation} 
\label{integral with power $2k$}
 \frac{1}{(2k)!} \int |R_N|^{2k} \bar{f}_N \ud Z 
\leq \frac{(2\|K\|_{L^\infty})^{2k}}{(2k)!} \,\frac{1}{N^{2k}}  \left(\sup_p \frac{M_p}{p} \right)^{2k} P_{N, 2k}\sum_{l=1}^k  \binom{N}{l} U_{N,2k}^l.
\end{equation}
It only remains to simplify the right-hand side of (\ref{integral with power $2k$}).
Since $n^n < e^n n!$,  
\begin{equation}
\label{U1}
U_{N, 2k}^l \leq 
e^{2k}\, (2k)! \,\Big(\sum_{\substack{ a_1+\cdots + a_l=2k,\\a_1 \geq 2,\cdots a_l \geq 2}} 1 \Big)= e^{2k} \,(2k)!\, \binom {2k-l-1}{l-1}.
\end{equation}
Indeed, the equality in (\ref{U1}), i.e.
\[
|\{(a_1, \cdots, a_l)|a_1 \geq 2, \cdots, a_l \geq 2, a_1+\cdots + a_l=2k \}| = \binom{2k-l-1}{l-1}
\]
 comes from the following  classical Combinatorics result where we take  $p=l$ and $b_\nu= a_{\nu}-1$ for $1 \leq \nu \leq p$, with  $q=2k-l$,
\begin{lemma} For integer-valued $p-$tuples $B_p=(b_1, \cdots, b_p) \in \mathcal{T}_{q,p}$, we have
\label{multicombination} 
\[
|\{ B_p \in \mathcal{T}_{q,p}| b_1+\cdots + b_p=q \}| = \binom{q-1}{p-1}. 
\]
\end{lemma}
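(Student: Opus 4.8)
The plan is to reduce Lemma \ref{multicombination} to the classical \emph{stars and bars} count and then invoke it directly. The first step is a small but necessary observation about the definition of $\mathcal{T}_{q,p}$: a tuple $B_p=(b_1,\dots,b_p)\in\mathcal{T}_{q,p}$ by definition has each $1\leq b_\nu\leq q$, so in particular every $b_\nu\geq 1$. Under the additional constraint $b_1+\cdots+b_p=q$, the upper bound is automatically satisfied, since each part obeys $b_\nu\leq q-(p-1)\leq q$ once the other $p-1$ parts are at least $1$. Hence the set to be counted is exactly the set of ordered $p$-tuples of \emph{positive} integers summing to $q$, i.e. the compositions of $q$ into $p$ positive parts, and the ceiling $b_\nu\leq q$ imposes no further restriction.

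Next I would exhibit the standard bijection that performs the count. Write $q$ as a row of $q$ identical units, creating $q-1$ gaps between consecutive units. To a composition $(b_1,\dots,b_p)$ into $p$ positive parts I associate the choice of $p-1$ distinct gaps at which to place separators: the separators cut the row into $p$ consecutive blocks of sizes $b_1,\dots,b_p$, and every block is nonempty precisely because no two separators occupy the same gap. Conversely, any choice of $p-1$ of the $q-1$ gaps recovers a unique positive composition. Equivalently, and perhaps cleaner to record, one may use the partial-sum map $s_i=b_1+\cdots+b_i$ for $1\leq i\leq p-1$: positivity of all parts is equivalent to $1\leq s_1<s_2<\cdots<s_{p-1}\leq q-1$, so the $(b_1,\dots,b_p)$ are in bijection with the $(p-1)$-element subsets of $\{1,\dots,q-1\}$.

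Either description yields immediately that the number of admissible tuples equals $\binom{q-1}{p-1}$, which is the assertion of the lemma. I do not expect any genuine obstacle here: the one point deserving explicit mention is the verification that the upper bound $b_\nu\leq q$ is vacuous under the summation constraint, as noted in the first step; the underlying combinatorial identity is entirely classical and the bijection is elementary. This is exactly the input needed for the displayed application with $p=l$, $q=2k-l$, and $b_\nu=a_\nu-1$, giving the count $\binom{2k-l-1}{l-1}$ used in \eqref{U1}.
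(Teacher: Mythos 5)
Your proof is correct and follows essentially the same route as the paper: the partial-sum map $s_i=b_1+\cdots+b_i$ giving a bijection with $(p-1)$-element subsets of $\{1,\dots,q-1\}$ is exactly the argument in the paper's proof (your stars-and-bars phrasing is just an equivalent description of the same bijection). Your additional remark that the upper bound $b_\nu\leq q$ in the definition of $\mathcal{T}_{q,p}$ is vacuous under the summation constraint is a small point the paper leaves implicit, but it does not change the argument.
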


\begin{proof} [Proof of Lemma \ref{multicombination}]
We give a quick proof for the sake of completeness. Let $c_1=b_1$, $c_2=b_1  + b_2$, $\cdots$, $c_{p-1}=b_1+\cdots + b_{p-1}$. Since $(b_1, \cdots, b_p)$ uniquely determines $(c_1, \cdots, c_{p-1})$ and reciprocally, we only need to check
\[
|\{(c_1, \cdots, c_{p-1}|1 \leq c_1 < c_2 < \cdots <c_{p-1} \leq q-1  \}| =\binom{q-1}{p-1}.
\]
This is simply obtained by choosing any $p-1$ distinct integers from the set $\{1, 2, \cdots, q-1\}$ and assigning the smallest to $c_1$, the second smallest to $c_2$, etc. 
\end{proof}

Coming back to the proof of Prop. \ref{ksmall}, since $1\leq l \leq k$, one has that
\[
  \binom{2k-l-1}{l-1} \leq \binom{2k}{k} \leq \frac{1}{\sqrt{k}} 2^{2k},
\]
by Stirling's formula.  Hence, inserting this bound in  (\ref{U1}),
\[
U_{N, 2k}^l \leq \frac{1}{\sqrt{k}} (2e)^{2k} (2k)!.
\]
Insert into \eqref{integral with power $2k$} this bound for $U_{N, 2k}^l$, and the definition (\ref{CNK}) of $P_{N,2k}$ to obtain that 
\begin{equation}
\label{final step}
\begin{split}
&\frac{1}{(2k)!} \int |R_N|^{2k}\, \bar{f}_N \ud Z \\
&\quad\leq   \frac{(2\|K\|_{L^\infty})^{2k}}{(2k)!}\, \frac{1}{N^{2k}} \, \left(\sup_p \frac{M_p}{p} \right)^{2k}\,  \left(2k\, e^k\, 2^{2k}\, k^k\, N^k \right)\\
&\qquad\qquad\qquad\qquad\qquad\qquad\qquad
 \left( \frac{1}{\sqrt{k}}\, (2e)^{2k}\, (2k)! \right)  \sum_{l=1}^k  \binom{N}{l} 
\\
& \quad\leq 2\, \sqrt{k}\, \left( 8\, \|K\|_{L^\infty}\, \left( \sup_p \frac{M_p}{p}\right)\right)^{2k}\, e^{3k}\, \frac{k^k}{N^k}\, \sum_{l=1}^k \binom{N}{l}\\
& \quad\leq 2\, \sqrt{k}\, \left( 8\, \|K\|_{L^\infty}\, \left( \sup_p \frac{M_p}{p}\right)\right)^{2k}\, e^{3k}\, \frac{k^k}{N^k}\, k\, \binom{N}{k} .
\end{split}
\end{equation}
Now we use Stirling's formula again to simplify the binomial coefficient above,
\[
\frac{k^k}{N^k} \binom{N}{k}=\frac{k^k}{N^k} \frac{N!}{(N-k)! k!}\leq \frac{1}{\sqrt{\pi k}} \sqrt{\frac{N}{N-k}} \left(\frac{N}{N-k}\right)^{N-k} . 
\]
Furthermore, the assumption $3k \leq N$ gives that $\frac{N}{N-k} \leq \frac{3}{2}$.  Thus
\[
\frac{k^k}{N^k} \binom{N}{k} \leq \sqrt{\frac{3}{2\pi k}}  \ e^k.
\]
Using this bound in (\ref{final step}), we get that for $1 \leq k \leq \lfloor \frac{N}{3}\rfloor$, 
\[
\frac{1}{(2k)!} \int |R_N|^{2k} \bar{f}_N \ud Z  \leq  2k \left( 8 e^2 \|K\|_{L^\infty} \left( \sup_p \frac{M_p}{p}\right)\right)^{2k},
\]
finishing the proof of Prop \ref{ksmall}. 
\end{proof}
\subsection{The case $3k > N$: Proof of Proposition \ref{kbig}}  
Now we establish the estimate for large $k$.

\begin{proof}[Proof of Proposition \ref{kbig}] 
We only need the trivial bound for $R_N$, that is 
\begin{equation*}
|R_N| \leq 2 \|K\|_{L^\infty} \sum_{i=1}^N |\nabla_{v_i} \log f|. 
\end{equation*}
Hence, for $k > \frac{N}{3}$, we have
\begin{equation}
\label{klargemain}
\begin{split}
& \frac{1}{(2k)!} \int |R_N|^{2k} \,\bar{f}_N \ud Z
\leq \frac{\left(2\|K\|_{L^\infty}\right)^{2k}}{(2k)!} \int \left( \sum_{i=1}^N |\nabla_{v_i} \log f(x_i, v_i)|\right)^{2k}\, \bar{f}_N \ud Z \\
= &\frac{\left(2\|K\|_{L^\infty}\right)^{2k}}{(2k)!} \sum_{\substack{a_1+\cdots + a_N=2k,\\a_1 \geq 0,\cdots a_N \geq 0}} \frac{(2k)!}{(a_1)! \cdots (a_N)!} M_{a_1}^{a_1} \cdots M_{a_N}^{a_N},\\
\end{split}
\end{equation}
with still the convention that $0!=1=0^0$ and where we recall that
\[
M_{a_i}^{a_i}=\int_{\Omega\times\R^d} |\nabla_{v_i} \log f(x, v)|^{a_i}\,\ud x\,\ud v.
\]
We use again the bound $M_{a_i} \leq a_i\,\sup_p\left( \frac{M_p}{p} \right)$ and hence for $1 \leq i \leq N$,
\[
M_{a_i}^{a_i} \leq a_i^{a_i} \left( \sup_p\frac{M_p}{p} \right)^{a_i} \leq e^{a_i} (a_i)!\left( \sup_p\frac{M_p}{p} \right)^{a_i}.
\]
   Hence, 
\[
M_{a_1}^{a_1} \cdots M_{a_N}^{a_N} \leq  e^{2k} \left( \sup_p\frac{M_p}{p} \right)^{2k}  (a_1)! \cdots (a_N)!.
\]
Therefore the estimate (\ref{klargemain}) becomes 
\begin{equation}
\label{klarge}
\frac{1}{(2k)!} \int |R_N|^{2k} \bar{f}_N \ud Z \leq \left(2 e\|K\|_{L^\infty}\left( \sup_p \frac{M_p}{p} \right)\right)^{2k}  V_{N,2k},
\end{equation}
where $V_{N,2k}=|\{(a_1, \cdots, a_N)\vert a_1 +\cdots +a_N=2k, a_i \geq 0 \text{ for  } 1 \leq i \leq N \}|$.

We can also write $V_{N, 2k} =|\{(b_1, \cdots, b_N)| b_1+\cdots +b_N =2k +N, b_i \geq 1, i=1, \cdots, N\}|$. By Lemma \ref{multicombination},  we have 
\begin{equation*}
V_{N, 2k} = \binom{2k+N-1}{N-1}.
\end{equation*}
We can write $N-1=2k s$, where $s < \frac{3}{2}$, yielding 
\begin{equation*}
\binom{2k+N-1}{N-1}= \frac{(2k(1+s))!}{(2ks)!(2k)!} .
\end{equation*}
Apply Stirling's formula to the factorials above, and notice that $(1+\frac{1}{s})^s < e$ for $s >0$. This shows that for $N \geq 2 $ and $3k > N$,
\[
V_{N, 2k} \leq \binom{2k+N-1}{N-1}\leq  \left(\frac{5}{2}\right)^{2k} e^{2k}.
\]
From this inequality, one obtains that \eqref{klarge} leads to
\begin{equation*}
\frac{1}{(2k)!} \int |R_N|^{2k} \bar{f}_N \ud Z \leq \left(5e^2 \|K\|_{L^\infty} \left( \sup_p \frac{M_p}{p} \right)\right)^{2k}. 
\end{equation*}
Summation over all $k > \frac{N}{3}$ completes the proof. 
\end{proof}
\section{Appendix: Weak-Strong Uniqueness, Proof of Theorem \ref{weakstrongpde} and Prop. \ref{strongexist}}
Let us start with the proof of Theorem \ref{weakstrongpde}.
Assume that $f$ and $\tilde{f}$ solve Vlasov equation  (\ref{PDE}) in weak sense. 
Assume that $f $ satisfies \eqref{explambda}. By density we may assume that $f$ is smooth, $C^1$, and decays at infinity without ever vanishing; just consider any such sequence $f_n$ satisfying uniformly the bound \eqref{explambda} and pass to the limit $f_n\rightarrow f$ at the end of the argument.

Consider for any $t\in [0,\ T]$ and decompose
\[\begin{split}
H(t)&=\int_{\Omega \times \mathbb{R}^d} \tilde{f} \log (\frac{\tilde{f}}{f}) \ud x \ud v=\int \tilde f\,\log \tilde f-\int \tilde f\log f\\
&\leq \int \tilde f^0\,\log \tilde f^0-\eps\int_0^t \int\frac{|\nabla_v \tilde f|^2}{\tilde f}-\int \tilde f\log f,
\end{split}\]
with $\tilde f^0=\tilde f(t=0)$ and per the assumption of dissipation of entropy for $\tilde f$ in Theorem \ref{weakstrongpde}.

By our assumption $f$ is smooth and $\log f$ can hence be used as a test function. Thus since $\tilde f$ is a solution to the Vlasov equation \eqref{PDE} in the sense of distribution, one has that
\[\begin{split}
&\int_{\Omega\times\R^d} \tilde f\, \log f=\int_{\Omega\times\R^d} \tilde f^0\,\log f^0\\
&+\int_0^t\!\int_{\Omega\times\R^d} \tilde f(s,x,v)\,(\partial_t \log f+v\cdot\nabla_x \log f+K\star\tilde \rho\cdot\nabla_v \log f+\eps\Delta_v \log f). 
\end{split}
\]
Since $f$ is a strong solution to the Vlasov equation, this leads to 
\[\begin{split}
&\int \tilde f\, \log f=\int \tilde f^0\,\log f^0+\int_0^t\!\int \tilde f(s,x,v)\,R\ud x \ud v \ud s\\
&+\eps\,\int_0^t\!\int \tilde f(s,x,v)\,\left(\frac{\Delta_v f}{f}+ \Delta_v \log f\right) \ud x \ud v \ud s,
\end{split}
\]
where we define
\[
R:=\nabla_v \log f(x, v)  \cdot \{ K\star \tilde{\rho}(x) -K\star\rho(x)\}.
\]
Observe now that, with usual entropy estimates
\[\begin{split}
&-\int \tilde f(s,x,v)\,\left(\frac{\Delta_v f}{f}+ \Delta_v \log f\right) \ud x\ud v-\int \frac{|\nabla_v \tilde f|^2}{\tilde{f}} \ud x \ud v\\
&\qquad=\int \left(-\tilde f\,\frac{|\nabla_v f|^2}{f^2}+2\frac{\nabla_v \tilde f\cdot\nabla_v f}{f}-\frac{|\nabla_v \tilde f|^2}{ \tilde{f}}\right)\ud x \ud v\\
&\qquad=-\int \tilde f\,|\nabla_v \log \frac{f}{\tilde f}|^2 \ud x \ud v\leq 0.
\end{split}
\]
Therefore
\begin{equation}
\label{ERE}
H(t)\leq H(0) -\int_0^t\int_{\Omega \times \mathbb{R}^d} \tilde{f} R \ud x \ud v\ud s.
\end{equation}
Note that by the definition of $R$
\[
\int_{\Omega \times \mathbb{R}^d} f\, R \,dx\,dv=\int \nabla_v f\,(K\star \tilde \rho-K\star \rho) \ud x \ud v=0,
\]
as $K\star \rho$ and $K\star \tilde \rho$ do not depend on $v$. Hence
\[
\int_{\Omega \times \mathbb{R}^d} \tilde{f} R \ud x \ud v=\int_{\Omega \times \mathbb{R}^d} (\tilde{f}-f)\, R \ud x \ud v.
\]
Simply bound
\[
\left\vert \int_{\Omega \times \mathbb{R}^d} \tilde{f} R \ud x \ud v \right\vert \leq \|K\star (\tilde \rho-\rho)\|_{L^\infty}  \int_{\Omega \times \mathbb{R}^d} |\nabla_v \log f| \ |\tilde{f}-f| \ud x \ud v.
\]
Observe that
\[
\|K*(\tilde{\rho} -\rho)\|_{L^\infty} \leq \|K\|_{L^\infty} \|\tilde{\rho}-\rho\|_{L^1} \leq \|K\|_{L^\infty} \|\tilde{f}-f\|_{L^1},
\]
so that
\[
H(t) \leq H(0)+\|K\|_{L^\infty} \, \|\tilde{f} -f\|_{L^1}\, \int_0^t \int_{\Omega \times \mathbb{R}^d} |\nabla_v \log f| \ |\tilde{f}-f| \ud x\ud v \ud s .
\]
Use the weighted CKP inequality in Theorem  1 in \cite{BV} with $\varphi(x, v) =|\nabla_v \log f|$ to obtain
\[
\int |\nabla_v \log f| |\tilde{f}-f| \ud x \ud v \leq \frac{2}{\lambda}
\left( \frac{3}{2} + \log \int e^{ \lambda|\nabla_v \log f|} f \ud x \ud v \right) \left(\sqrt{H} + \frac{1}{2} H\right).
\]
Recall the notation 
\[
\theta_f=\sup_{t\in [0,\ T]}  \int e^{\lambda |\nabla_v \log f|}\, f\, \ud x \ud v<\infty,
\]  
by the assumption \eqref{explambda}. This leads to 
\[
{H}(t) \leq H(0)+C\,(1+\log \theta_f)\, \|K\|_{L^\infty}\,\|f-\tilde f\|_{L^1}\, \int_0^t \left(\sqrt{H}+\frac{H}{2}\right) \ud s.
\]
Simply use now the classical CKP inequality (see \cite{V}) to find
\begin{equation}
\label{gronwallweakstrong}
{H}(t) \leq H(0)+C\,(1+\log \theta_f)\, \|K\|_{L^\infty}\, \int_0^t \left(H+\frac{H^{3/2}}{2}\right) \ud s.
\end{equation}
As long as $H(t) \leq 1$, then $H^{\frac{3}{2}} \leq H$. Eq. \eqref{gronwallweakstrong} gives a Gronwall's inequality which proves Theorem \ref{weakstrongpde}.

\bigskip

\noindent {\bf Proof of Prop. \ref{strongexist}}. We first denote the linear operator for a fixed $\rho(t,x)$ as 
\[
L= v \cdot \nabla_x f + K \star \rho \cdot \nabla_v .
\]
To show the existence of a smooth solution over a short time, it is sufficient to propagates some norms of $|\nabla f|$. 

\medskip

\noindent \textbf{Step I}: Propagate $\|\nabla f \|_{L^1}$ and $\|\nabla f\|_{L^\infty}$. 
It is easy to check that 
\begin{equation}
\label{VlasovDeri}
\begin{cases}
\partial_t (\nabla_x f ) + L(\nabla_x f) =\eps \Delta_v (\nabla_x f) - (K \star \nabla_x \rho ) \cdot \nabla_v f ,\\
\partial_t (\nabla_v f ) + L(\nabla_v f) =\eps \Delta_v (\nabla_v f) - \nabla_x f .
\end{cases}
\end{equation}
In the following, we also write 
\[
\nabla f = \left(\begin{array}{c}
\nabla_x f \\
\nabla_v f \\
\end{array} \right).
\]
Hence the equation (\ref{VlasovDeri}) can be written as 
\[
\partial_t (\nabla f) + L (\nabla f) =\eps \Delta_v(\nabla f) -\left( \begin{array}{c}
(K \star \nabla_x \rho ) \cdot \nabla_v f\\
\nabla_x f \\
\end{array}\right).
\]
The evolution of  $\|\nabla f\|_{L^1}$ is given by 
\[\begin{split}
\frac{\ud }{\ud t} \|\nabla f\|_{L^1} &\leq \left( \|K \star \nabla_x \rho\|_{L^\infty} + 1 \right) \|\nabla f \|_{L^1}\left(\|K\|_{L^\infty} \|\nabla \rho\|_{L^1} +1 \right) \|\nabla f\|_{L^1}\\
 &\leq \left(\|K\|_{L^\infty} \|\nabla f\|_{L^1} +1 \right) \|\nabla f\|_{L^1}.
\end{split}\]
This is a closed inequality as the right-hand side only depends on $\|\nabla f\|_{L^1}$. This may blow-up in finite time because of the $\|\nabla f\|_{L^1}^2$. However there exists $T>0$ which depends only on $\|\nabla f^0\|_{L^1}$ s.t. $\sup_{t\leq T} \|\nabla f\|_{L^1}<\infty$. This is the time interval over which Prop. \ref{strongexist} holds.

By the maximum principle, we can now bound $\| \nabla f\|_{L^\infty}$ up to this time $T$. Indeed
\[
\frac{\ud }{\ud t} \|\nabla f\|_{L^\infty}  \leq \left(\|K\|_{L^\infty} \|\nabla f\|_{L^1} +1 \right) \|\nabla f\|_{L^\infty} \lesssim \|\nabla f\|_{L^\infty}. 
\]
Observe that there cannot be any blow-up in $\|\nabla\|_{L^\infty}$ before there is blow-up in $\|\nabla\|_{L^1}$.

To conclude this step, we have obtained a time $T>0$, s.t. 
\[
\|\nabla f \|_{L^1} \leq C,  \quad   \|\nabla f\|_{L^\infty} \leq C,\quad \forall t\leq T,
\]
where $C$ depends on $\|K\|_{L^\infty}$,  $\|\nabla f^0\|_{L^1}$ and $\|\nabla f^0\|_{L^\infty}$.

\medskip

\noindent\textbf{Step II}: Define the variable quantity 
\[
\Theta_f(t, \lambda) := \int_{\Omega \times \mathbb{R}^d} f \exp(\lambda|\nabla \log f |) \ud x \ud v .
\]
The main object below is to bound $\Theta_f(t, \lambda)$ in $[0, T]$ for some $\lambda$ as the estimate required for weak-strong uniqueness argument is  
\[
\sup_{t \in [0, T]}\int f \exp(\lambda |\nabla_v \log f |)\ud z < \infty.
\]
First, we derive the equation for $\exp(\lambda|\nabla \log f|)$. Denote
\[
\vec{N}=\nabla \log f =\left( \begin{array}{c} \vec{N}_x\\ \vec{N}_v\end{array}\right) =\left( \begin{array}{c}\nabla_x \log f\\ \nabla_v \log f \end{array}\right),  \quad 
\vec{n}= \frac{\vec{N}}{|\vec{N}|}.
\]
By Eq. \eqref{VlasovDeri}, one has that
\[
\begin{split}
&(\partial_t+ L ) \exp(\lambda |\nabla \log f|) =\lambda \exp(\lambda |\nabla \log f|) \vec{n} \cdot (\partial_t +L)\vec{N}\\
&= \lambda \exp(\lambda |\nabla \log f|) \vec{n} \cdot  \left( \begin{array}{c}
\!\!\!-(K \star \nabla_x \rho ) \cdot \nabla_v \log f + \frac{\eps}{f}\,(\Delta_v (\nabla_x f) -\nabla_x \log f \Delta_v f )\!\!\!\\
-\nabla_x \log f +\frac{\eps}{f}\, (\Delta_v(\nabla_v f) -\nabla_v \log f \Delta_v f )
\end{array} \right)\\
&\leq  C \lambda  \exp(\lambda |\nabla \log f|) |\nabla \log f |\\
&\hspace{80pt} + \eps \lambda \frac{1}{f} \exp(\lambda |\nabla \log f|) \vec{n} \cdot  \left( \begin{array}{c} \Delta_v (\nabla_x f) -\nabla_x \log f \Delta_v f \\ \Delta_v(\nabla_v f) -\nabla_v \log f \Delta_v f \end{array} \right).\\
\end{split}
\]
Thus 
\[
\begin{split}
&\partial_t (f \exp(\lambda |\nabla \log f|)) +L (f \exp(\lambda|\nabla \log f |))\\
&\leq   C \lambda f \exp(\lambda |\nabla \log f|) |\nabla \log f | + \eps \exp(\lambda|\nabla \log f |) \Delta_v f\\
&\hspace{50pt} + \eps \lambda  \exp(\lambda|\nabla \log f |) \vec{n} \cdot \left( \begin{array}{c}
\Delta_v (\nabla_x f) -\nabla_x \log f \Delta_v f \\
\Delta_v(\nabla_v f) -\nabla_v \log f \Delta_v f \end{array} \right).\\
\end{split}
\]
Hence, by integration by parts,
\[
\frac{\ud }{\ud t} \int_{\Omega \times \mathbb{R}^d} f \exp(\lambda |\nabla \log f|) \ud z  \leq C\lambda \int f \exp(\lambda |\nabla \log f |) |\nabla \log f| + Q_\epsilon,
\]
where $Q_\epsilon$ is an extra term due to the diffusion,
\[
\begin{split}
Q_\epsilon = &\epsilon \lambda \int \frac{\exp(\lambda|\nabla \log f|)}{|\nabla \log f |} \Big( \nabla_x \log f \cdot \Delta_x (\nabla_x f) - |\nabla_x \log f|^2 \Delta_v f +\\
& \qquad \nabla_v \log f \cdot \Delta_v(\nabla_v f)- |\nabla_v \log f|^2 \Delta_v f\Big) + \eps \int \exp(\lambda |\nabla \log f|) \Delta_v f.
\end{split}
\]
Notice that 
\[\begin{split}
(\nabla_x  \log f)\cdot \Delta_v(\nabla_x f) =&|\nabla_x \log f|^2 \Delta_v f + 2(\nabla_x \log f) \cdot (\nabla_v f \cdot \nabla_v ) (\nabla_x \log f)\\
& + f \nabla_x \log f \cdot \Delta_v (\nabla_x \log f) ,
\end{split}\]
and 
\[
\begin{split}
(\nabla_v  \log f)\cdot  \Delta_v(\nabla_v f) =&|\nabla_v \log f|^2 \Delta_v f + 2(\nabla_v \log f) \cdot (\nabla_v f \cdot \nabla_v ) (\nabla_v \log f)\\
& + f \nabla_v \log f \cdot \Delta_v (\nabla_v \log f).
\end{split}
\]
We hence obtain that
\[
\begin{split}
Q_\eps &= 2 \lambda \eps \int f\exp(\lambda|\nabla \log f|) \vec{n} \cdot( \vec{N}_v \cdot \nabla_v)\vec{N} + \lambda \eps \int f\exp(\lambda|\nabla \log f|) \vec{n} \cdot \Delta_v \vec{N}\\
&\qquad + \eps \int \exp(\lambda |\nabla \log f|) \Delta_v f \\
&=\lambda \eps \int f\exp(\lambda|\nabla \log f|)  \ \vec{N}_v(\nabla_v \vec{N}\vec{n})+ \lambda \eps \int f\exp(\lambda|\nabla \log f|) \vec{n} \cdot \Delta_v \vec{N}\\
&= \lambda \eps \int f\exp(\lambda|\nabla \log f|)  \ \vec{N}_v(\nabla_v \vec{N}\vec{n})- \lambda \epsilon \sum_{i=1}^{2d} \int f\exp(\lambda|\nabla \log f|)\nabla_v N_i \cdot \nabla_v n_i\\
&\qquad-\lambda \eps \sum_{i=1}^{2d} \int f\exp(\lambda|\nabla \log f|)(\vec{N}_v + \lambda \nabla_v \vec{N}\vec{n})n_i \nabla_v N_i \\
&\qquad\\
&= -\lambda^2 \epsilon \int f\exp(\lambda|\nabla \log f|) |\nabla_v \vec{N} \vec{n}|^2 -\lambda \eps \int f\exp(\lambda|\nabla \log f|) \nabla_v \vec{N} \cdot \nabla_v \vec{n}\\
&\leq 0.
\end{split}
\]
Hence, 
\[
\frac{\ud }{\ud t} \int_{\Omega \times \mathbb{R}^d} f \exp(\lambda |\nabla \log f|) \ud z  \leq C\lambda \int f \exp(\lambda |\nabla \log f |) |\nabla \log f|.
\]
That is 
\[
\partial_t \Theta_f-C\lambda \partial_\lambda \Theta_f \leq 0.
\]
The characteristic equation is given by $\lambda(t) = \lambda_0 e^{-Ct}$ which implies 
\[
\Theta_f(t, \lambda(t)) \leq \Theta_f(0, \lambda_0) =\int f \exp(\lambda_0 |\nabla \log f |) < \infty.
  \] 
  Hence we get 
\begin{equation*}
\int f \exp(\lambda_0 e^{-Ct} |\nabla \log f |) \leq \Theta_f(0)< \infty.
\end{equation*}
Consequently \eqref{explambda} holds for $\lambda < \lambda_0 e^{-CT}$, where $C=\|K \star  \nabla_x \rho\|_{L^\infty}+1 <\infty$.  \\

In the case $\eps=0$, we can easily propagate the bound for $|\nabla \log f |$ by tracing back the characteristics.


\end{document}